\documentclass{article}

\usepackage{amsfonts,amssymb,amsmath,graphicx,authblk,rotating}
\usepackage[margin=25mm]{geometry}
\providecommand{\keywords}[1]
{
  \small	
  \textbf{\textit{Keywords -- }} #1
}
\newtheorem{theorem}{Theorem}[section]

\newtheorem{lemma}{Lemma}[section]

\newtheorem{remark}{Remark}[section]
\newtheorem{hp}{Assumption}[section]
\newenvironment{proof}{\noindent{\bf Proof.\hskip.5em}\ignorespaces}{}
\usepackage[numbers]{natbib}

\usepackage{subcaption}
\usepackage{bm}
\usepackage{mathrsfs}
\usepackage{tikz}
\usepackage{lscape}
\usepackage{cases}
\usepackage{mathtools}
\usepackage{cancel}
\usepackage[inter-unit-product=\cdot,
            separate-uncertainty=true,
            print-unity-mantissa=false,
            table-alignment-mode=none,
            table-alignment=right,
            exponent-product=\cdot]{siunitx}

\usepackage{hyperref}
\hypersetup{colorlinks, linkcolor=black, citecolor=red, urlcolor=blue}
\usepackage[capitalize]{cleveref}

\sisetup{group-digits=integer}

\Crefname{hp}{Assumption}{Assumptions}

\newcommand{\lymph}{\texttt{lymph}}
\newcommand{\bdout}{{\rm w}}

\newcommand{\PP}{{\rm el}}
\newcommand{\FF}{{\rm f}}
\renewcommand{\vec}[1]{{\bm{#1}}}

\newcommand{\jj}{{\rm j}}
\newcommand{\kk}{{\rm k}}
\newcommand{\EE}{{\rm E}}
\newcommand{\II}{{\rm I}}
\newcommand{\DD}{{\rm D}}
\newcommand{\NN}{{\rm N}}

\newcommand{\normcoerc}[1]{%
  |\mkern-1.5mu|
   #1
  |\mkern-1.5mu|
}
\newcommand{\normDGd}[1]{\normcoerc{#1}_{{\rm DG},{\rm D}}}
\newcommand{\normDGj}[1]{\normcoerc{#1}_{{\rm DG},{\rm P}_\jj}}
\newcommand{\normDGk}[1]{\normcoerc{#1}_{{\rm DG},{\rm P}_\kk}}
\newcommand{\normDGE}[1]{\normcoerc{#1}_{{\rm DG},{\rm P}_\EE}}
\newcommand{\normDGu}[1]{\normcoerc{#1}_{{\rm DG},{\rm U}}}
\newcommand{\normDGp}[1]{\normcoerc{#1}_{{\rm DG},{\rm P}_\FF}}
\newcommand{\normENporoel}[1]{\normcoerc{#1}_{\PP,t}}
\newcommand{\normENfluid}[1]{\normcoerc{#1}_{\FF,t}}
\newcommand{\normEN}[1]{\normcoerc{#1}_{{\rm EN},t}}
\newcommand{\normENzero}[1]{\normcoerc{#1}_{{\rm EN},0}}

\newcommand{\normcont}[1]{%
  |\mkern-1.5mu|\mkern-1.5mu|
   #1
  |\mkern-1.5mu|\mkern-1.5mu|
}
\newcommand{\normcontD}[1]{\normcont{#1}_{{\rm D}}}
\newcommand{\normcontJ}[1]{\normcont{#1}_{{\rm P}_\jj}}
\newcommand{\normcontK}[1]{\normcont{#1}_{{\rm P}_\kk}}
\newcommand{\normcontU}[1]{\normcont{#1}_{{\rm U}}}
\newcommand{\normcontP}[1]{\normcont{#1}_{{\rm P}_\FF}}

\newcommand{\Div}{{\rm div}}

\newcommand{\averagel}{\{\!\!\{}
\newcommand{\averager}{\}\!\!\}}

\newcommand{\jumpl}{[\![}
\newcommand{\jumpr}{]\!]}

\newcommand{\average}[1]{\averagel#1\averager}

\newcommand{\jump}[1]{\jumpl#1\jumpr}

\newcommand{\spaceW}{\vec{W}^{\rm DG}_h}
\newcommand{\spaceV}{\vec{V}^{\rm DG}_h}
\newcommand{\spaceQ}{Q^{\rm DG}_h}
\newcommand{\spaceQj}{Q^{\rm DG}_{\jj,h}}
\newcommand{\spaceQk}{Q^{\rm DG}_{\kk,h}}
\newcommand{\spaceQE}{Q^{\rm DG}_{\EE,h}}

\newcommand{\cc}{\cos(\pi x)\cos(\pi y)}

\renewcommand{\ss}{\sin(\pi x)\sin(\pi y)}

\DeclareSIUnit\mmhg{mmHg}

\title{Polytopal discontinuous Galerkin discretization of brain multiphysics flow dynamics}
\author{Ivan Fumagalli, Mattia Corti, Nicola Parolini, Paola F. Antonietti}
\affil{\small MOX, Department of Mathematics, Politecnico di Milano, piazza Leonardo da Vinci 32, Milan, 20133, Italy}
\date{}

\begin{document}

\maketitle

\begin{abstract}
A comprehensive mathematical model of the multiphysics flow of blood and Cerebrospinal Fluid (CSF) in the brain can be expressed as the coupling of a poromechanics system and Stokes' equations: the first describes fluids filtration through the cerebral tissue and the tissue's elastic response, while the latter models the flow of the CSF in the brain ventricles.
This model describes the functioning of the brain's waste clearance mechanism, which has been recently discovered to play an essential role in the progress of neurodegenerative diseases.
To model the interactions between different scales in the porous medium, we propose a physically consistent coupling between Multi-compartment Poroelasticity (MPE) equations and Stokes' equations.
In this work, we introduce a numerical scheme for the discretization of such coupled MPE-Stokes system, employing a high-order discontinuous Galerkin method on polytopal grids
to efficiently account for the geometric complexity of the domain.
We analyze the stability and convergence of the space semidiscretized formulation, we prove a-priori error estimates, and we present a temporal discretization based on a combination of Newmark's $\beta$-method for the elastic wave equation and the $\theta$-method for the other equations of the model.
Numerical simulations carried out on test cases with manufactured solutions validate the theoretical error estimates.
We also present numerical results on a two-dimensional slice of a patient-specific brain geometry reconstructed from diagnostic images, to test in practice the advantages of the proposed approach.
\end{abstract}

\keywords{Cerebrospinal fluid, Stokes' equation, Multiple-Network Poroelasticity Theory, Polygonal/polyhedral mesh, Multiphysics system}

\section{Introduction}\label{sec:intro}

In the brain, multiple fluid components play different roles: the blood supplies nutrients and oxygen and removes carbon dioxide, while the Cerebrospinal Fluid (CSF) and the glymphatic system \cite{glymphatic1,glymphatic2,glymphatic3} have the primary function of clearing the waste produced by brain activity.
Also, it has been recently shown that waste clearance mechanisms play a major role in the evolution of neurodegenerative diseases \cite{bacyinski2017paravascular,kylkilahti2021achieving,gouveia2021perivascular,brennan2023role}.
These physical systems are strongly interconnected with one another and with the cerebral matter: for example, a large amount of the CSF is generated in the choroid plexus thanks to the high concentration of blood capillary vessels in it \cite{csfGeneration}; also, an auxiliary function of the CSF is to protect the brain from impact against the skull and to compensate blood pulsatility in terms of flow rate and pressure inside the braincase.
For this reason, the modeling of the fluid dynamics in the brain requires a multiphysics perspective, able to capture these interactions and their mutual interplay.

From the modeling viewpoint, the brain can be described as a porous material (white and gray matter) with fluids both filtrating through it and flowing in hollow regions (brain ventricles).
In terms of mathematical modeling, this can be represented as the coupling between a poromechanics system -- described, e.g., by Darcy's or Biot's equations \cite{biotZienkiewicz}
-- and Stokes/Navier-Stokes' equations for the flow in the ventricles \cite{linninger2016cerebrospinal,gholampour2023mathematical}.
Numerical methods for such Fluid-Poroelastic Structure Interaction (FPSI) systems can be found in the literature: the most studied is the Biot-Stokes' system
\cite{badia2009coupling,zunino2018biot,ager2019biotLITREV,boon2022biot}, 
but also more complex models have been investigated, e.g.~considering a multilayer structure of the porous medium \cite{bociu2021multilayered,bukavc2015multilayered}, a non-linear constitutive model for either the structural or fluid part of the system \cite{dereims20153d},
or 
Multi-compartment Poroelasticity (MPE) models coupled with Stokes' equations \cite{digregorio2021computational,barnafi2022multiscale}.
This latter framework is the best suited to model brain poromechanics and CSF flow, since it can simultaneously represent cerebral tissue deformation, blood vessel networks at different scales, CSF flow, and waste clearance.
Moreover, the dynamic formulation of MPE equations can describe the inertial forces associated with blood pressure pulsatility during a heartbeat, which affects vascular and tissue deformations and waste clearance \cite{tully2011cerebral,chou2016fully,daversin2020mechanisms,corti2022numerical}.

The analysis of conforming Finite Element methods for FPSI problems has been addressed in several of the abovementioned works, with a discussion on the inf-sup requirements on the different components of the system (poroelastic, fluid, and coupling conditions).
However, aiming at an accurate representation of the poromechanics and flow with low dispersion and dissipation errors, high-order discretization methods provide a better choice \cite{quarteroni2009numerical,godlewski2013numerical}.
Polytopal Discontinuous Galerkin (PolyDG) methods fit in this framework, and they can also naturally account for complex geometries such as vessel networks and brain folds, thanks to their flexibility in terms of local refinement and agglomeration, hanging nodes treatment, and generality of mesh element shapes \cite{bassi2012flexibility,cangiani2014hp,antonietti2016review,cangiani2017book,pazner2018convergence,cangiani2022hp}.
In addition, discontinuous Galerkin schemes provide a general framework to embed physically-consistent interface conditions directly in the weak form.
So far, these methods have been mostly employed to solve porous media and poroelasticity problems for geophysical applications \cite{lipnikov2014discontinuous,antonietti2021high,antonietti2022high}
and for fluid dynamics and fluid-structure interaction problems \cite{cockburn2002local,di2010discrete,wirasaet2014discontinuous,antonietti2019numerical,ye2021conforming,zonca2021polygonal,AMVZ22}.

In this work, we introduce a high-order PolyDG method to spatially discretize a coupled model encompassing dynamic Multiple-Network Poroelastic (MPE) equations for poromechanics \cite{corti2022numerical} and Stokes' equations \cite{antonietti2023discontinuous,AMVZ22}, with physically-consistent coupling conditions inspired from the mass and stress balance at the interface between the brain tissue and the CSF.
Moreover, we take full advantage of the DG framework to embed, directly in the physically consistent formulation, the coupling conditions at the interface between the poroelastic and fluid regions.
We analyze the well-posedness and convergence of the semidiscrete numerical method, and we employ a combination of Newmark's $\beta$-method and the $\theta$-method for time discretization.

The paper is organized as follows.
In \cref{sec:model}, we introduce the multiphysics problem, both in strong and weak form, and discuss the coupling conditions.
\cref{sec:polydg} describes the PolyDG space discretization; stability and convergence properties are analyzed in \cref{sec:apriori}.
Then, time discretization is introduced \cref{sec:fullydiscrete}.
Verification tests are discussed in \cref{sec:results}, corroborating the theoretical results of \cref{sec:apriori}.
\cref{sec:brain} demonstrates the capabilities of the method on a two-dimensional brain section considering physiological settings.

\section{Mathematical model}\label{sec:model}

We introduce the mathematical model consisting of the coupling between a Multiple-Network Poroelasticity system and Stokes' system: the former, introduced in \cite{corti2022numerical}, accounts for the poromechanics of the brain tissue and its interaction with different fluids compartments flowing in its pores, while the latter describes the flow of the CSF in the brain ventricles.
To ease the presentation, we consider a simplified configuration whose two-dimensional representation is depicted in \cref{fig:domain}.
The poroelastic medium occupies a portion $\Omega_\PP\subset\mathbb R^d$ ($d = 2,3$) of the domain, while the CSF flows according to Stokes' equations in the remaining portion $\Omega_\FF\subset\mathbb R^d$.
The interaction between the two systems occurs at the interface $\Sigma=\overline{\Omega}_\PP\cap\overline{\Omega}_\FF$, which we suppose to be a (piecewise) smooth $(d-1)-$manifold.
The source of CSF comes from an exchange of mass with the poroelastic medium and it exits from the domain at $\Gamma_\text{out}$, while the rest of the domain boundary is a solid wall $\Gamma_\bdout$.
We denote by $\Omega$ the interior of $\overline{\Omega}_\PP\cup\overline{\Omega}_\FF$.

\begin{figure}
    \centering
    \includegraphics[width=0.5\textwidth]{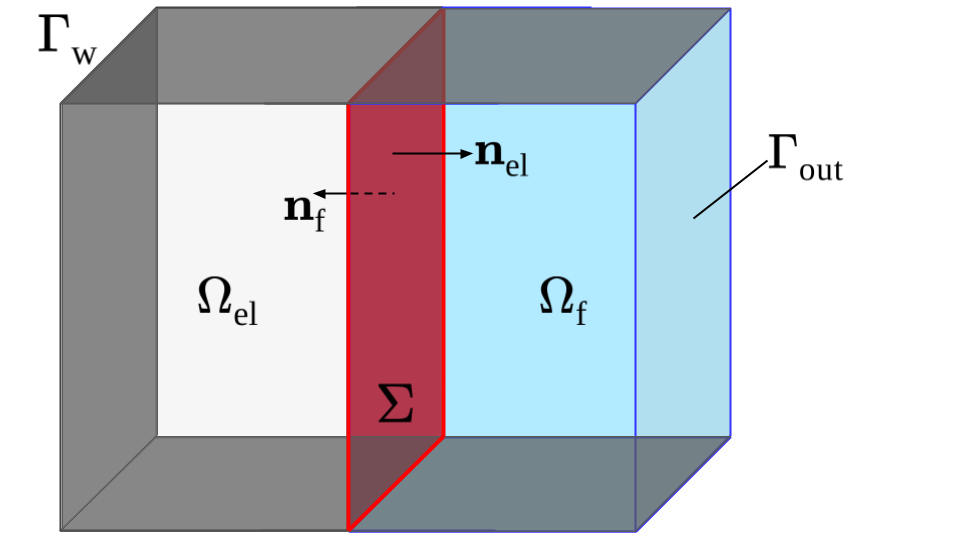}
    \caption{Domain scheme: poroelastic medium in $\Omega_\PP$ (light grey), Stokes' flow of CSF in $\Omega_\FF$ (blue), interface $\Sigma$ (red), and external boundaries $\Gamma_\bdout$ (dark grey) and $\Gamma_\text{out}$.}
    \label{fig:domain}
\end{figure}

Given a final observation time $T>0$, we introduce the CSF velocity $\vec{u}:\Omega_\FF\times[0,T]\to\mathbb  R^d$ and pressure $p:\Omega_\FF\times[0,T]\to\mathbb  R$, the solid tissue displacement $\vec{d}:\Omega_\PP\times[0,T]\to\mathbb R^d$, and the network pressures $p_\jj:\Omega_\PP\times[0,T]\to\mathbb R$, $\jj\in J$, where $J$ is a given set of labels denoting the different fluid network compartments \cite{corti2022numerical}.
In particular, for the application at hand, we consider $J = \{\text{A},\text{C},\text{V},\EE\}$, where A, C, V correspond to the arterial, capillary, and venous blood compartments, respectively, while $\EE$ denotes the extracellular CSF permeating the brain tissue.

The coupled problem reads as follows:
\begin{subnumcases}{\label{eq:NSMPE}}
    \rho_\PP\partial_{tt}^2\vec{d} - \nabla\cdot\sigma_\PP(\vec{d}) + \sum_{\kk\in J}\alpha_\kk\nabla p_\kk = \vec{f}_\PP,
    &$ \qquad\text{in } \Omega_\PP\times(0,T],$ \label{eq:elasticity}\\
    c_\jj\partial_t p_\jj+\nabla\cdot\left(\alpha_\jj\partial_t\vec{d}-\frac{1}{\mu_\jj}K_\jj\nabla p_\jj\right) \\
    \qquad+ \sum_{\kk\in J}\beta_{\jj\kk}(p_\jj-p_\kk) + \beta_\jj^\text{e}p_\jj = g_\jj,
    &$ \qquad\text{in } \Omega_\PP\times(0,T],\quad\forall\jj\in J,$ \label{eq:pj}\\
    \rho_\FF\partial_t\vec{u} - \nabla\cdot\sigma_\FF(\vec{u}) + \nabla p = \vec{f}_\FF,
    &$ \qquad\text{in } \Omega_\FF\times(0,T],$ \label{eq:fluidMom}\\
    \nabla\cdot\vec{u} = 0,
    &$ \qquad\text{in } \Omega_\FF\times(0,T],$ \label{eq:fluidCont},
\end{subnumcases}
where the linear elastic and fluid (viscous) stress tensors are defined as
$
\sigma_\PP(\vec{d}) = 2\mu_\PP\varepsilon(\vec{d})+\lambda(\nabla\cdot\vec{d})I
$ and $
\sigma_\FF(\vec{u}) = 2\mu_\FF\varepsilon(\vec{u})
$, respectively, with $
\varepsilon(\vec{w}) = \left(\nabla\vec{w}+\nabla\vec{w}^T\right)/2,
$.
The body forces $\vec{f}_\PP:\Omega_\PP\times(0,T]\to\mathbb R^d,g_\jj:\Omega_\PP\times(0,T]\to\mathbb R,\vec{f}_\FF:\Omega_\FF\times(0,T]\to\mathbb R^d$ are supposed sufficiently regular.
The parameters of the models are explained in \cref{tab:modelparams}, with their typical physiological values extracted from \cite{lee2019mixed,corti2022numerical,causemann2022human}.
The initial and boundary conditions of problem \eqref{eq:NSMPE} are defined as 
\begin{subnumcases}{\label{eq:bd}}
    \Bigl(\vec{d}(0), \partial_t\vec{d}(0)\Bigr)=\Bigl(\vec{d}_0,\dot{\vec{d}}_0\Bigr), \quad p_\jj(0)=p_{\jj0},
    &$ \qquad\text{in } \Omega_\PP,\quad\forall \jj\in J,$\\
    \vec{u}(0) = \vec{u}_0
    &$ \qquad\text{in } \Omega_\FF,$\\
    \vec{d} = \vec{d}_\DD = \vec{0},
    &$ \qquad\text{on } \Gamma_\bdout\times(0,T],$\\
    \frac{1}{\mu_\jj}K_\jj\nabla p_\jj\cdot\vec{n}_\PP = 0,
    &$ \qquad\text{on } \Gamma_\bdout\times(0,T],\quad\forall \jj\in J,$\\
    \vec{u} = \vec{u}_\DD = \vec{0},
    &$ \qquad\text{on } \Gamma_\bdout\times(0,T],$\\
    (\sigma_\FF(\vec{u}) - p I)\vec{n}_\FF = -\overline{p}^\text{out}\vec{n}_\FF,
    &$ \qquad\text{on } \Gamma_\text{out}\times(0,T],$
\end{subnumcases}
 with suitable definition of the data function $\overline{p}^\text{out}:\Gamma_\text{out}\times(0,T]\to\mathbb R$, that represents the external normal stress at the outlet, and of the initial conditions $\vec{d}_0:\Omega_\PP\to\mathbb R^d, \dot{\vec{d}}_0:\Omega_\PP\to\mathbb R^d, \vec{u}_0:\Omega_\FF\to\mathbb R^d, p_{\jj 0}:\Omega_\PP\to\mathbb R, \jj\in J$.

\begin{table}
    \centering
    \begin{tabular}{ccl}
        parameter & phys.~values & description\\
        \hline
        $\rho_\PP$ &   $\SI{1000}{\kilo\gram\per\cubic\meter}$ & density of the solid tissue \\
        $\rho_\FF$ &  $\SI{1000}{\kilo\gram\per\cubic\meter}$ & density of the CSF \\
        $\mu_\PP$ & $\SI{216}{\pascal}$ & first Lam\'e parameter of the solid \\
        $\lambda$ & $\SI{505}{\pascal}$ & second Lam\'e parameter of the solid \\
        $\mu_\jj$ & $\SI{3.5e-3}{\pascal\second}$ & viscosity of the fluid in compartment $\jj\in J$ \\
        $\mu_\FF$ & $\SI{3.5e-3}{\pascal\second}$ & viscosity of CSF \\
        $\alpha_\jj$ & $\in[0,1)$ & Biot-Willis coefficient of compartment $\jj\in J$ \\
        $c_\jj$ & $\SI{1e-6}{\square\meter\per\newton}$ & storage coefficient of compartment $\jj\in J$ \\
        $k_\jj$ & $\SI{1e-11}{\square\meter}$ & $K_\jj=k_\jj I$ permeability tensor for compartment $\jj\in J$ \\
        $\beta_{\jj\kk}$ & $\SI{1}{\square\meter\per\newton\per\second}$ & coupling transfer coefficient between compartments \\
        &&(from $\kk\in J$ to $\jj\in J$) \\
        $\beta_{\jj}^\text{e}$ & $\SI{1}{\square\meter\per\newton\per\second}$ & external coupling coefficient for compartment $\jj\in J$
    \end{tabular}
    \caption{Parameters of model \eqref{eq:NSMPE} with corresponding physiological values.}
    \label{tab:modelparams}
\end{table}

On the interface $\Sigma$, we introduce the following coupling conditions, based on physiological considerations:
\begin{subnumcases}{\label{eq:interf}}
    \sigma_\PP(\vec{d})\vec{n}_\PP - \sum_{\kk\in J} \alpha_\kk p_\kk\vec{n}_\PP + \sigma_\FF(\vec{u})\vec{n}_\FF - p \vec{n}_\FF = \vec{0}, 
    &$ \quad\text{on } \Sigma\times(0,T],$
    \label{eq:BCtotalstress}\\
    \frac{1}{\mu_\jj}K_\jj\nabla p_\jj\cdot\vec{n}_\PP = 0,
    &$ \quad\text{on } \Sigma\times(0,T],\quad\forall\jj\in J\setminus\{\EE\},$\label{eq:BCpnonE}\\
    \vec{u}\cdot\vec{n}_\FF + \left(\partial_t\vec{d}-\frac{1}{\mu_\EE}K_\EE\nabla p_\EE\right)\cdot\vec{n}_\PP = 0,
    &$ \quad\text{on } \Sigma\times(0,T],$\label{eq:BCnormalflux}\\
    p_\EE = p - \sigma_\FF(\vec{u})\vec{n}_\FF\cdot\vec{n}_\FF, 
    &$ \quad\text{on } \Sigma\times(0,T],$\label{eq:BCnormalstress}\\
    \left(\sigma_\FF(\vec{u})\vec{n}_\FF - p \vec{n}_\FF\right)\wedge\vec{n}_\FF = \vec{0}, &$ \quad\text{on } \Sigma\times(0,T].$\label{eq:BCtgstress}
\end{subnumcases}
Condition \eqref{eq:BCtotalstress} expresses the balance of total normal stress.
Due to the blood-brain barrier \cite{abbott2010structure,daneman2015blood}, we assume that mass exchange between the poroelastic domain and the CSF only occurs through compartment $\EE$, as expressed by \eqref{eq:BCpnonE}-\eqref{eq:BCnormalflux}.
Consistently, the normal stress of the CSF fluid is balanced by the pressure of compartment $\EE$, as in \eqref{eq:BCnormalstress}, while we assume the tangential stress on the fluid to be negligible (cf.~\eqref{eq:BCtgstress}).
Similar assumptions were made in \cite{causemann2022human}, although we do not make use of the Beavers-Joseph-Saffman condition 
\cite{mikelic2000interface}.

Aiming at solving problem \eqref{eq:NSMPE} with the Finite Element method, we introduce its weak formulation.
For the sake of generality, let $\Gamma_{\text{D},\vec{d}},\Gamma_{\text{D},\vec{u}},\Gamma_{\text{D},P_\jj}$, with $\jj\in J$, denote the portions of $\partial\Omega$ where Dirichlet boundary conditions on $\vec{d},\vec{u},p_\jj$ are imposed, respectively.
Then, we introduce the following functional spaces:
\[\begin{gathered}
\vec{W} = \{\vec{w}\in[H^1(\Omega_\PP)]^d \colon \vec{w}=0 \text{ on }\Gamma_{\text{D},\vec{d}}\},\qquad
\vec{V} = \{\vec{v}\in[H^1(\Omega_\FF)]^d\colon \vec{v}=0 \text{ on }\Gamma_{\text{D},\vec{u}}\},\\
Q_\jj = \{q_\jj\in H^1(\Omega_\PP)\colon q_\jj=0 \text{ on }\Gamma_{\text{D},P_\jj}\}, \ \forall\jj\in J,\qquad
Q = L^2(\Omega_\FF),
\end{gathered}\]
where $H^1(\Omega)$ denotes the classical Sobolev space of order 1 over $L^2(\Omega)$.
For the problem at hand, all Dirichlet boundary conditions on $\partial\Omega$ are homogenous.
We denote by $(\cdot,\cdot)_{\Omega}$ the $L^2$-product over $\Omega$ and we define the following forms and functionals over the spaces introduced above:

\[\begin{aligned}
a_\PP:\vec{W}\times\vec{W}\to\mathbb R,
&\qquad a_\PP(\vec{d},\vec{w}) = (\sigma_\PP(\vec{d}),\varepsilon(\vec{w}))_{\Omega_\PP},\\
a_\jj:Q_\jj\times Q_\jj\to\mathbb R,
&\qquad a_\jj(p_\jj,q_\jj) = \left(\frac{1}{\mu_\jj}K_\jj\nabla p_\jj,\nabla q_\jj\right)_{\Omega_\PP} \qquad\forall\jj\in J,\\
C_\jj:\left(\bigtimes_{\kk\in J}Q_\kk\right)\times Q_\jj\to\mathbb R,
&\qquad
C_\jj(\{p_\kk\}_{\kk\in J},q_\jj) = 
\sum_{\kk\in J}(\beta_{\kk\jj}(p_\jj-p_\kk), q_\jj)_{\Omega_\PP}
+ (\beta_{\jj}^\text{e}p_\jj, q_\jj)_{\Omega_\PP} \qquad\forall\jj\in J,
\\
a_\FF:\vec{V}\times\vec{V}\to\mathbb R,
&\qquad a_\FF(\vec{u},\vec{v}) = (\sigma_\FF(\vec{u}),\varepsilon(\vec{v}))_{\Omega_\FF},\\
b_\jj:Q_\jj\times\vec{W}\to\mathbb R,
&\qquad b_\jj(q_\jj,\vec{w}) = -(\alpha_\jj p_\jj,\Div\vec{w})_{\Omega_\PP} \qquad\forall\jj\in J,\\
b_\FF:Q\times\vec{V}\to\mathbb R,
&\qquad b_\FF(q,\vec{v}) = -(q,\Div\vec{v})_{\Omega_\FF},\\
F_\PP:\vec{W}\to\mathbb R,
&\qquad F_\PP(\vec{w}) = (\vec{f}_\PP,\vec{w})_{\Omega_\PP},\\
F_\jj:Q_\jj\to\mathbb R,
&\qquad F_\jj(q_\jj) = (g_\jj,q_\jj)_{\Omega_\PP} \qquad\forall\jj\in J,\\
F_\FF:\vec{V}\to\mathbb R,
&\qquad F_\FF(\vec{v}) = (\vec{f}_\FF,\vec{v})_{\Omega_\FF},\\
\mathfrak J:Q_\EE\times\vec{W}\times\vec{V}\to\mathbb R,
&\qquad \mathfrak J(p_\EE,\vec{w},\vec{v}) = \int_\Sigma p_\EE\left(\vec{w}\cdot\vec{n}_\PP+\vec{v}\cdot\vec{n}_\FF\right)d\Sigma.
\end{aligned}\]

\begin{remark}[Derivation of the interface form $\mathfrak J$]\label{rem:J}
The interface form $\mathfrak J:\spaceQE\times\spaceW\times\spaceV\to\mathbb R$ introduced 
above 
naturally arises during the derivation of the weak form of problem \eqref{eq:NSMPE}.
We test \eqref{eq:elasticity}-\eqref{eq:pj} against functions $\vec{w}\in\vec{W}$ and $q_\jj\in Q_\jj$, with $\jj\in J$, over $\Omega_\PP$, and \eqref{eq:fluidMom} against $\vec{v}\in \vec{V}$ over $\Omega_\FF$. Then, integrating by parts and summing all the contributions yield the following boundary terms on the interface:
\begin{equation}\label{eq:neumanntermsCONT}
    \int_\Sigma\left[
    (pI-\sigma_\FF(\vec{u}))\colon \vec{v}\otimes\vec{n}_\FF
    + \left(\sum_{\kk\in J}\alpha_\kk p_\kk I-\sigma_\PP(\vec{d})\right)\colon \vec{w}\otimes\vec{n}_\PP
    - \sum_{\jj\in J}\frac{1}{\mu_\jj}K_\jj\nabla p_\jj\cdot q_\jj\vec{n}_\PP
    \right]d\Sigma.
\end{equation}
Using the interface conditions \eqref{eq:BCtotalstress}-\eqref{eq:BCpnonE} and then \eqref{eq:BCnormalflux}-\eqref{eq:BCnormalstress}-\eqref{eq:BCtgstress}, we can rewrite \eqref{eq:neumanntermsCONT} as follows:
\begin{equation}
\begin{aligned}
\int_\Sigma&\left[
(pI-\sigma_\FF(\vec{u}))\colon (\vec{v}\otimes\vec{n}_\FF + \vec{w}\otimes\vec{n}_\PP) - \frac{1}{\mu_\EE}K_\EE\nabla p_\EE\cdot q_\EE\vec{n}_\PP
\right]d\Sigma \\
&= \int_\Sigma\left[
p_\EE (\vec{v}\cdot\vec{n}_\FF + \vec{w}\cdot\vec{n}_\PP) - q_\EE(\vec{u}\cdot\vec{n}_\FF + \partial_t\vec{d}\cdot\vec{n}_\PP)
\right]d\Sigma
=\mathfrak J(p_\EE,\vec{w},\vec{v}) - \mathfrak J(q_\EE,\partial_t\vec{d},\vec{u}),
\end{aligned}\end{equation}
where we also used that $\vec{a}\otimes\vec{b}\colon I=\vec{a}\cdot\vec{b}$ for any $\vec{a},\vec{b}\in \mathbb R^d$.
\end{remark}

Denoting by $L^2(0,T;H), H^1(0,T; H)$ the time-dependent Bochner spaces associated to a Sobolev space $H$, 
and setting
\[
\mathscr{D}=H^2(0,T; \vec{W}),\ \ \mathscr{P}=\bigtimes_{\jj\in J}H^1(0,T; Q_\jj),\ \ \mathscr{V}=H^1(0,T; \vec{V}),\ \ \mathscr{Q}=L^2(0,T;Q),
\]
the weak formulation of problem \eqref{eq:NSMPE} reads as follows:\\
Find $(\vec{d},\{p_\jj\}_{\jj\in J},\vec{u},p)\in \mathscr{D}\times\mathscr{P}\times\mathscr{V}\times\mathscr{Q}$ such that, for all $t\in(0,T]$,
\begin{equation}\label{eq:weak}\begin{aligned}
    &(\rho_\PP\partial_{tt}^2\vec{d},\vec{w})_{\Omega_\PP} + a_\PP(\vec{d},\vec{w}) + \sum_{\jj\in J} b_\jj(p_\jj,\vec{w}) - F_\PP(\vec{w}) \\
    &\qquad+ \sum_{\jj\in J}\Bigl[(c_\jj\partial_t p_\jj,q_\jj)_{\Omega_\PP} + a_\jj(p_\jj,q_\jj) + C_j(\{p_\kk\}_{\kk\in J},q_\jj) - b_\jj(q_\jj,\partial_t\vec{d}) - F_\jj(q_\jj) \Bigr]\\
    &\qquad+ (\rho_\FF\partial_t\vec{u},\vec{v})_{\Omega_\FF} + a_\FF(\vec{u},\vec{v}) + b_\FF(p,\vec{v})+b_\FF(q,\vec{u}) - F_\FF(\vec{v})\\
    &\qquad+ \mathfrak J(p_\EE,\vec{w},\vec{v}) - \mathfrak J(q_\EE,\partial_t\vec{d},\vec{u}) = 0\\
\end{aligned}\end{equation}
for all $(\vec{w},\{q_\jj\}_{\jj\in J},\vec{v},q) \in \mathscr{D}\times\mathscr{P}\times\mathscr{V}\times\mathscr{Q}$,
and $\vec{d}(0)=\vec{d}_0, \partial_t\vec{d}(0)=\dot{\vec{d}}_0, \vec{u}(0)=\vec{u}_0, p_\jj(0)=p_{\jj0}$ $\forall \jj\in J$.

\section{Semidiscrete formulation based on a polytopal discontinuous Galerkin method}\label{sec:polydg}
In this section, we introduce a space discretization of problem \eqref{eq:weak} based on discontinuous Finite Element methods on polytopal grids.

\subsection{Notation}\label{sec:notation}
Let $\mathscr T_{h,\PP},\mathscr T_{h,\FF}$ be polytopal meshes discretizing the domains $\Omega_\PP,\Omega_\FF$, respectively.
We define as \emph{faces} of an element $K\in \mathscr T_{h,\PP}\cup\mathscr T_{h,\FF}$ the $(d-1)$-dimensional entities corresponding to the intersection of $\partial K$ with either the boundary of a neighboring element or the domain boundary $\partial\Omega$:
\begin{itemize}
    \item for $d=2$, the faces are always straight line segments;
    \item for $d=3$, the faces are generic polygons. We assume that each face can be decomposed into triangles.
\end{itemize}
With this definition, we denote by $\mathscr F_\PP, \mathscr F_\FF$ the sets of element faces corresponding to each physical domain.
We partition them into internal faces $\mathscr F_\PP^\II,\mathscr F_\PP^\II$, Dirichlet/Neumann faces $\mathscr F_\PP^\DD/\mathscr F_\PP^\NN\subset\partial\Omega_\PP\setminus\Sigma,\mathscr F_\PP^{\DD_\jj}/F_\PP^{\NN_\jj}\subset\partial\Omega_\PP\setminus\Sigma,\mathscr F_\FF^\DD/\mathscr F_\FF^\NN\subset\partial\Omega_\FF\setminus\Sigma$ (for the elastic displacement, the pressure of the $\jj$-th compartment, and the fluid velocity, respectively), and interface faces $\mathscr F^\Sigma\subset\Sigma$.
In the latter, we assume that the meshes $\mathscr T_{h,\PP},\mathscr T_{h,\FF}$ are aligned with $\Sigma$, namely that there is no gap or overlap between them, although hanging nodes are permitted. 

We introduce the symmetric outer product $\vec{v}\odot\vec{n} = \frac{1}{2}(\vec{v}\otimes\vec{n}+\vec{n}\otimes\vec{v})$ and, for regular enough scalar-, vector- and tensor-valued functions $q,\vec{v},\tau$,
we define the following average and jump operators:
\begin{itemize}
    \item On each internal face $F\in\mathscr F^\II=\mathscr F_\PP^\II\cup \mathscr F_\FF^\II$ we set
    \begin{align*}
    \average{q} &= \frac{1}{2}(q^++q^-),
    &\average{\vec{v}} &= \frac{1}{2}(\vec{v}^++\vec{v}^-),
    &\average{\tau} &= \frac{1}{2}(\tau^++\tau^-),
        \\
    \jump{q} &= q^+\vec{n}^++q^-\vec{n}^-,
    &\jump{\vec{v}} &= \vec{v}^+\odot\vec{n}^++\vec{v}^-\odot\vec{n}^-,
    &\jump{\tau} &= \tau^+\vec{n}^++\tau^-\vec{n}^-.
    \end{align*}
    where $\vec{n}^+,\vec{n}^-$ are defined as in \cref{fig:FSigma} - left.
    \item On a Dirichlet face $F\in\mathscr F_\PP^\DD\cup\left(\bigcup_{\jj\in J}\in\mathscr F_\PP^{\DD_\jj}\right)\cup\mathscr F_\FF^\DD$:
    \begin{align*}
    \average{q} &= q,
    &\average{\vec{v}} &= \vec{v},
    &\average{\tau} &= \tau,
        \\
    \jump{q} &= q\vec{n},
    &\jump{\vec{v}} &= \vec{v}\odot\vec{n},
    &\jump{\tau} &= \tau\vec{n},
    \end{align*}
    where $\vec{n}$ is the unit normal vector pointing outward to the element $K$ to which the face $F$ belongs.
    \item On a face $F\in\mathscr F^\Sigma$ shared by two elements $K_\PP\in\mathscr T_{h,\PP}$ and $K_\FF\in\mathscr T_{h,\FF}$:
    \begin{align*}
        \average{q} &= q|_{K_\PP},
        \quad&\average{\tau} &= \tau|_{K_\PP},
        \quad&\jump{\vec{w},\vec{v}} &= \vec{w}|_{K_\PP}\odot\vec{n}_\PP+\vec{v}|_{K_\FF}\odot\vec{n}_\FF,
    \end{align*}
    where $\vec{n}_\PP,\vec{n}_\FF$ are defined as in \cref{fig:FSigma} - right.
\end{itemize}

\begin{figure}
    \centering
    \includegraphics[width=0.27\textwidth]{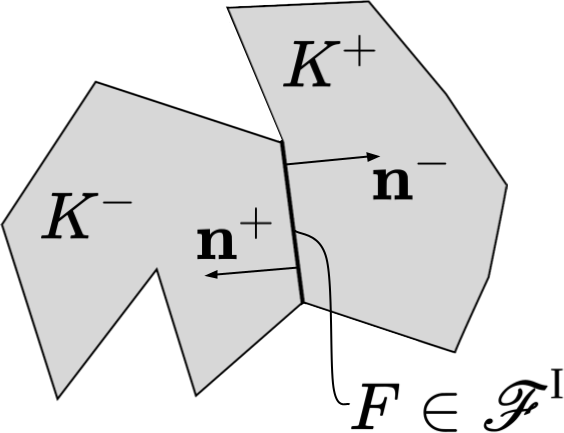}
    \qquad\qquad
    \includegraphics[width=0.3\textwidth]{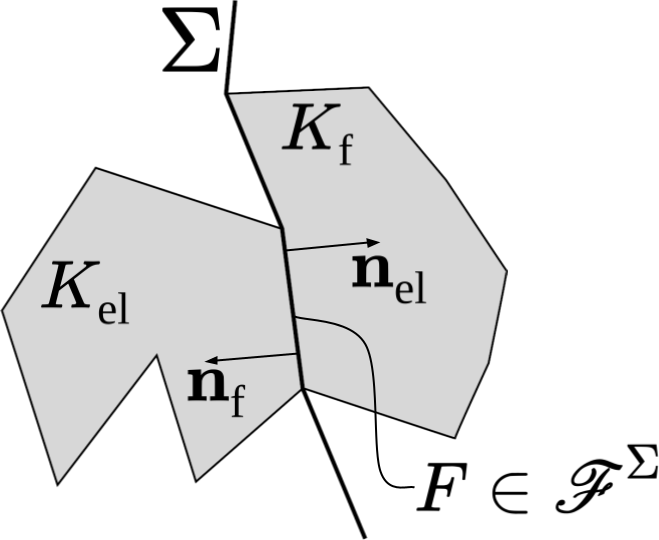}
    \caption{Polygonal elements sharing an internal face (left) or a face on the interface $\Sigma$ (right).}
    \label{fig:FSigma}
\end{figure}

\subsection{PolyDG semidiscrete problem}\label{sec:polydgsub}

For a given integer $m\geq1$, we introduce the following piecewise polynomial spaces:
\begin{gather*}
    X_h^\text{DG}(\Omega_\star) =\{\phi\in L^2(\Omega_\star)\colon\phi|_K\in\mathbb P^m(K)\quad\forall K\in\mathscr T_\star\}, \qquad \star=\PP,\FF\\
    \spaceQj = X_h^\text{DG}(\Omega_\PP),
    \quad
    \spaceQ = X_h^\text{DG}(\Omega_\FF),
    \quad
    \spaceW = [X_h^\text{DG}(\Omega_\PP)]^d,
    \quad
    \spaceV = [X_h^\text{DG}(\Omega_\FF)]^d.
\end{gather*}

Moreover, we denote by $H^s(\mathscr T_\PP),H^s(\mathscr T_\FF)$ the broken Sobolev spaces of order $s$ over the mesh of the poroelastic and fluid domains, namely
$H^s(\mathscr T_\star) = \{q\in L^2(\Omega_\star) \colon q|_K\in H^s(K)\quad\forall K\in\mathscr T_\star\}$
for $\star=\PP,\FF$.
We then introduce the following forms, for $\vec{d},\vec{w}\in[H^1(\mathscr T_{h,\PP})]^d, p_\jj,q_\jj\in H^1(\mathscr T_{h,\PP}), \text{ with }\jj\in J, \vec{u},\vec{v}\in[H^1(\mathscr T_{h,\FF})]^d, p,q\in H^1(\mathscr T_{h,\FF})$:

\begin{subequations}\label{eq:formsSeparated}\begin{align}
    \begin{split}
    \mathcal A_\PP(\vec{d},\vec{w}) &= \int_{\Omega_\PP}\sigma_\PP(\vec{d})\colon\varepsilon_h(\vec{w})
    \\&\qquad
    - \sum_{F\in\mathscr F_\PP^\II\cup\mathscr F_\PP^\DD}
    \int_F\left(\average{\sigma_\PP(\vec{d})}\colon\jump{\vec{w}}+\jump{\vec{d}}\colon\average{\sigma_\PP(\vec{w})}
    -
    \eta\jump{\vec{d}}\colon\jump{\vec{w}}\right),
    \end{split}\\
    \mathcal F_\PP(\vec{w}) &= \int_{\Omega_\PP}\vec{f}_\PP\cdot\vec{w}
    ,\\
    \mathcal B_\jj(p_\jj,\vec{w}) &= -\int_{\Omega_\PP}\alpha_\jj p_\jj\,\Div_h\,\vec{w}
    + \sum_{F\in\mathscr F_\PP^\II\cup\mathscr F_\PP^{\DD_\jj}}
    \int_F\alpha_\jj \average{p_\jj I}\colon\jump{\vec{w}}, 
    ,\\
    \begin{split}\mathcal A_\jj(p_\jj,q_\jj) &= \int_{\Omega_\PP}\mu_\jj^{-1}K_\jj\nabla_h p_\jj\cdot\nabla_h q_\jj
    \\&\hspace{-1em}- \sum_{F\in\mathscr F_\PP^\II\cup\mathscr F_\PP^{\DD_\jj}}
    \int_F\left(\average{\mu_\jj^{-1}K_\jj\nabla_h p_\jj }\cdot\jump{q_\jj } +\jump{p_\jj }\cdot\average{\mu_\jj^{-1}K_\jj\nabla_h q_\jj }
    -
    \zeta_\jj\jump{p_\jj}\cdot\jump{q_\jj}
    \right)
    ,\end{split}\\
    \mathcal C_\jj(\{p_\kk\}_{\kk\in J},q_\jj) &= \int_{\Omega_\PP}\sum_{\kk\in J}\beta_{\kk\jj}(p_\jj-p_\kk)q_\jj + \int_{\Omega_\PP}\beta_\jj^\text{e}p_\jj q_\jj
    ,\\
    \mathcal F_\jj(q_\jj) &= \int_{\Omega_\PP}g_\jj q_\jj 
    ,\\
    \begin{split}\mathcal A_\FF(\vec{u},\vec{v}) &= \int_{\Omega_\FF}\sigma_\FF(\vec{u})\colon\varepsilon_h(\vec{v})
    \\&- \sum_{F\in\mathscr F_\FF^\II\cup\mathscr F_\FF^\DD}
    \int_F\left(\average{\sigma_\FF(\vec{u})}\colon\jump{\vec{v}}+\jump{\vec{u}}\colon\average{\sigma_\FF(\vec{v})}
    -
    \gamma_\vec{v}\jump{\vec{u}}\colon\jump{\vec{v}}
    \right)
    ,\end{split}\\
    \mathcal B_\FF(p,\vec{v}) &= -\int_{\Omega_\FF} p\,\Div_h\,\vec{v}
    + \sum_{F\in\mathscr F_\FF^\II\cup\mathscr F_\FF^{\DD}}
    \int_F \average{p I}\colon\jump{\vec{v}}
    ,\\
    \mathcal F_\FF(\vec{v}) &= \int_{\Omega_\FF}\vec{f}_\FF\cdot\vec{v}
    ,\\
    \mathcal S(p,q) &=
    \sum_{F\in\mathscr F_\FF^\II}
    \int_F\gamma_p\jump{p}\cdot\jump{q}
    ,\\
    \mathcal J(p_\EE, \vec{w},\vec{v}) &=
        \sum_{F\in\mathscr{F}^\Sigma}\int_F\left(
    \average{p_\EE I} \colon \jump{\vec{w},\vec{v}}
    \right),
\end{align}\end{subequations}
where $\nabla_h, \varepsilon_h, \Div_h$ denote the element-wise gradient, symmetric gradient, and divergence operators, respectively, and the stress tensors $\sigma_\PP,\sigma_\FF$ are implicitly defined in terms of these piecewise operators.
The parameters $\eta,\zeta_\jj,\gamma_\vec{v},\gamma_p$ appearing in these forms are defined as follows \cite{AMVZ22,corti2022numerical}:
\begin{equation}\label{eq:penaltyparams}
\eta = \overline{\eta}\frac{\overline{\mathbb C}_\PP^K}{\{h\}_\text{H}},
\qquad
\zeta_\jj = \overline{\zeta}_\jj\frac{\overline{k}_\jj^K}{\sqrt{\mu_\jj}\{h\}_\text{H}},
\qquad
\gamma_\vec{v} = \overline{\gamma}_\vec{v}\frac{\mu}{\{h\}_\text{H}},
\qquad
\gamma_p = \overline{\gamma}_p\{h\}_\text{H},
\end{equation}
where
$\{h\}_\text{H}$ denotes the harmonic average on $K^{\pm}$ (with $\{h\}_\text{H}=h_K$ on Dirichlet faces)
$\overline{\mathbb C}_\PP^K = \|\mathbb C_\PP^{1/2}|_K\|_2^2, \overline{k}_\jj^K = \|K_\jj^{1/2}|_K\|_2^2$ are the $L^2$-norms of the symmetric second-order tensors appearing in the elasticity and Darcy equations, for each $K\in\mathscr T_\PP$,
and
$\overline{\eta},\overline{\zeta_\jj}\ \forall\jj\in J,\overline{\gamma}_\vec{v},\overline{\gamma}_p$ are penalty constants to be chosen large enough.

The form $J(\cdot,\cdot,\cdot)$ is the piecewise discontinuous version of the interface form $\mathfrak J(\cdot,\cdot,\cdot)$ derived in \cref{rem:J}.
Indeed, for any $p_\EE\in H^1(\mathscr T_{h_\PP}),\vec{w}\in [H^1(\mathscr T_{h,\PP}]^d,\vec{v}\in[H^1(\mathscr T_{h,\FF})]^d$,
\begin{equation}\label{eq:Jdiscr}\begin{aligned}
\mathfrak J(q_\EE,\vec{w},\vec{v})
&=
\sum_{F\in\mathscr{F}^\Sigma}\int_F\left[
    p_\EE I \colon (\vec{w}\odot\vec{n}_\PP + \vec{v}\odot\vec{n}_\FF)
    - q_\EE I \colon(\partial_t\vec{d}\odot\vec{n}_\PP + \vec{u}\odot\vec{n}_\FF)
    \right]\\
&=
    \sum_{F\in\mathscr{F}^\Sigma}\int_F\left[
    \average{p_\EE I} \colon \jump{\vec{w},\vec{v}} - \jump{\partial_t\vec{d},\vec{u}} \colon \average{q_\EE I}
    \right],
\end{aligned}\end{equation}
where we have used the identity $\vec{a}\cdot\vec{b} = {\vec{a}\odot\vec{b}\ \colon I}, \forall\vec{a},\vec{b}\in\mathbb R^d$ and then the definition of the average and jump operators introduced in \cref{sec:notation}.

Finally, the semidiscrete formulation reads as follows:
\\
    For any $t\in(0,T]$, find $(\vec{d}_h,\{p_{\jj,h}\}_{\jj\in J},\vec{u}_h,p_h)\in \spaceW\times\left(\bigtimes_{\jj\in J} \spaceQj \right) \times \spaceV \times \spaceQ$ such that
\begin{equation}\label{eq:DG}\begin{aligned}
    (\rho_\PP\partial_{tt}^2\vec{d}_h&,\vec{w}_h)_{\Omega_\PP} + \mathcal L_\PP(\vec{d}_h,\{p_{\kk,h}\}_{\kk\in J}; \vec{w}_h) - \mathcal F_\PP(\vec{w}_h) \\
    & + \sum_{\jj\in J} \left[ (c_\jj\partial_t p_{\jj,h}, q_{\jj,h})_{\Omega_\PP} + \mathcal L_\jj(\{p_{\kk,h}\}_{\kk\in J},\partial_t\vec{d}_h;q_{\jj,h}) - \mathcal F_\jj(q_{\jj,h}) \right] \\
    & + (\rho_\FF\partial_t\vec{u}_h,\vec{v}_h)_{\Omega_\FF}+\mathcal L_\FF(\vec{u}_h,p_h;\vec{v}_h,q_h) - \mathcal F_\FF(\vec{v}_h)
    \\
    & + \mathcal J(p_{\EE,h}, \vec{w}_h,\vec{v}_h) - \mathcal J (q_{\EE,h}, \partial_t\vec{d}_h,\vec{u}_h) = 0 \\ &\forall\vec{w}_h\in\spaceW,\vec{v}_h\in\spaceV,q_h\in\spaceQ,q_{\jj,h}\in\spaceQj.
\end{aligned}\end{equation}
Problem \eqref{eq:DG} is supplemented with suitable initial conditions $\vec{d}_h(0), \dot{\vec{d}}_h(0), \{p_{\jj,h}(0)\}_{\jj\in J}, \vec{u}_h(0)$ that are projections of the initial data introduced in \eqref{eq:NSMPE} onto the corresponding DG spaces.
The bilinear forms appearing in \eqref{eq:DG} are defined as
\begin{subequations}\label{eq:formsAllTogether}\begin{align}
    \mathcal L_\PP(\vec{d},\{p_\kk\}_{\kk\in J}; \vec{w}) &= \mathcal A_\PP(\vec{d},\vec{w}) + \sum_{\kk\in J}\mathcal B_\kk(p_\kk,\vec{w}) 
    , \\
    \mathcal L_\jj(\{p_\kk\}_{\kk\in J},\partial_t\vec{d};q_\jj) &=  \mathcal A_\jj(p_\jj,q_\jj) + \mathcal C_\jj(\{p_\kk\}_{\kk\in J},q_\jj) - \mathcal B_\jj(q_\jj,\partial_t \vec{d}) 
    , \qquad \forall\jj\in J, \\
    \mathcal L_\FF(\vec{u},p;\vec{v},q) &= \mathcal A_\FF(\vec{u},\vec{v}) + \mathcal B_\FF(p,\vec{v}) 
    - \mathcal B_\FF(q,\vec{u}) + \mathcal S(p,q).
\end{align}\end{subequations}

\subsection{Algebraic formulation}\label{sec:algSemiDiscr}
We introduce suitable sets of basis functions such that 
$
{\rm span}\{\bm\varphi_\PP^i\}_{i=0}^{N_\PP}=\spaceW$, ${\rm span}\{\bm\varphi_\PP^i\}_{i=0}^{N_\FF}=\spaceV$, ${\rm span}\{\psi^i\}_{i=0}^{N_p}=\spaceQ$, ${\rm span}\{\psi_\jj^i\}_{i=0}^{N_\jj}=\spaceQj$ for $\jj\in J$.
Denoting by uppercase letters the d.o.f.~vectors corresponding to the problem unknowns, the (formal) algebraic form of \eqref{eq:DG} is the following:\\
Given $\vec{D}_0,\dot{\vec{D}}_0,\vec{U}_0,\vec{P}_{\jj0},\jj\in J$, find $\vec{D},\vec{U},\vec{P},\vec{P}_\jj,\jj\in J$ such that
\begin{equation}\label{eq:formalalg}
\begin{bmatrix}
M_\PP\partial_{tt}^2 + A_\PP & B_\text{A}^T & \cdots & B_\text{E}^T+J_{\PP}^T & 0 & 0 \\
-B_\text{A}\partial_t & M_\text{A}\partial_t + A_\text{A} + C_{\text{A}\text{A}} & \cdots & C_{\text{A}\text{E}} & 0 & 0\\
\vdots &\vdots &\vdots &\vdots &\vdots &\vdots\\
-B_\EE + J_{\PP}\partial_t & C_{\EE\text{A}} & \cdots & M_\EE\partial_t + A_\EE + C_{\EE\EE} & -J_{\FF} & 0 \\
0 & 0 & \cdots & J_{\FF}^T & M_\FF\partial_t + A_\FF & B_\FF^T \\
0 & 0 & \cdots & 0 & -B_\FF & S
\end{bmatrix}
\begin{bmatrix}
\vec{D} \\ \vec{P}_\text{A} \\ \vdots \\ \vec{P}_\EE \\ \vec{U} \\ \vec{P}
\end{bmatrix}
=
\begin{bmatrix}
\vec{F}_\PP \\ \vec{F}_\text{A} \\ \vdots \\ \vec{F}_\EE \\ \vec{F}_\FF \\ \vec{0}
\end{bmatrix}
\end{equation}
The matrices and vectors of \eqref{eq:formalalg} are defined as follows
(where $\star=\PP,\FF$):
\begin{align*}
[M_\star]_{ij} &= (\rho_\star\bm\varphi_\star^j,\bm\varphi_\star^i)_{\Omega_\star}, &
[M_\jj]_{ij} &= (c_\jj\psi_\jj^j,\phi_\jj^i)_{\Omega_\PP}, &
[S]_{ij} &= \mathcal S(\psi^j,\phi^i)
\\
[A_\star]_{ij} &= \mathcal A_\star(\bm\varphi_\star^j,\bm\varphi_\star^i), &
[A_\jj]_{ij} &= \mathcal A_\jj(\psi_\jj^j,\phi_\jj^i), &
[C_\jj]_{ij} &= \mathcal C_\jj(\{\psi_\kk^i\}_{\kk\in J},\psi_\jj^i), \\
[B_\FF]_{ij} &= \mathcal B_\FF(\psi^i,\bm\varphi_\FF^j), &
[B_\jj]_{ij} &= \mathcal B_\jj(\psi_\jj^i,\varphi_\PP^i), &
[J_\star]_{ij} &= \sum_{F\in\mathscr F^\Sigma}\int_F \average{q_\text{E}^j I}\colon \bm\varphi_\star^i\odot\vec{n}_\star, \\
[F_\star]_i &= \mathcal F_\star(\vec{\varphi}_\star^i), &
[F_\jj]_i &= \mathcal F_\jj(\psi_\jj^i).
\end{align*}

\section{A priori analysis of the semidiscrete problem}\label{sec:apriori}

For the analysis contained in this section, we consider a generic set $J$ of $N_J\in\mathbb N_0$ compartments and we assume that all the physical parameters of the model (defined in \cref{tab:modelparams}) are piecewise constant.
For $r\geq 1$, we introduce the following broken norms \cite{corti2022numerical,AMVZ22}:
\begin{subequations}\label{eq:brokennorms}\begin{align}
    \normDGd{\vec{d}}^2 &= \|{\mathbb C_\PP^{1}{2}}[\varepsilon_h(\vec{d})]\|_{L^2(\mathscr T_\PP)}^2 + \|\sqrt{\eta}\jump{\vec{d}}\|_{\mathscr F_{\PP,h}^{\II}\cup\mathscr F_{\PP,h}^{\DD}}^2
    &\forall\vec{d}\in \vec{H}^r(\mathscr T_h),
    \\
    \normDGj{p}^2 &= \|\mu_\jj^{-1/2}K_\jj^{1/2}\nabla_h p\|_{L^2(\mathscr T_\PP)}^2 + \|\sqrt{\zeta_\jj}\jump{p}\|^2_{\mathscr F_{\PP,h}^{\II}\cup\mathscr F_{\PP,h}^{\DD_\jj}}
    &\forall p \in H^r(\mathscr T_h),
    \\
    \normDGu{\vec{u}}^2 &= \|\sqrt{2\mu}\,\varepsilon_h(\vec{u})\|_{L^2(\mathscr T_\FF)}^2 + \|\sqrt{\gamma_\vec{v}}\jump{\vec{u}}\|^2_{\mathscr F_{\FF,h}^{\II}}
    &\forall\vec{u}\in \vec{H}^r(\mathscr T_h),
    \\
    \normDGp{q}^2 &= \|q\|_{L^2(\Omega_\FF)}^2 + \|\sqrt{\gamma_p}\jump{q}\|^2_{\mathscr F_{\FF,h}^{\II}\cup\mathscr F_{\FF,h}^{\DD}}
    &\forall q\in H^r(\mathscr T_h).
\end{align}\end{subequations}
We introduce the energy norms at time $t\in(0,T]$
\begin{align*}
    \normENporoel{(\vec{d},\{p_\jj\}_{\jj\in J})}
    &= \left[\|\sqrt{\rho_\PP}\partial_t\vec{d}(t)\|_{\Omega_\PP}^2 + \normDGd{\vec{d}(t)}^2 
    \phantom{\sum_{\jj\in J}\int_0^t}\right.\\&\qquad\left.
    + \sum_{\jj\in J}\left(\|\sqrt{c_\jj}p_\jj(t)\|_{\Omega_\PP}^2 + \int_0^t\left(\normDGj{p_\jj(s)}^2+\|\sqrt{\beta^\text{e}_\jj}p_\jj(s)\|_{\Omega_\PP}^2\right)ds\right)\right]^{1/2},
    \\
    \normENfluid{(\vec{u},p)}
    &= \left[\|\sqrt{\rho_\FF}\vec{u}(t)\|_{\Omega_\FF}^2 + \int_0^t\left(\normDGu{\vec{u}(s)}^2 +\normDGp{p(s)}^2\right)ds\right]^{1/2},
\end{align*}
and set
\begin{align}\label{eq:normcoerc}
    \normEN{(\vec{d},\{p_\jj\}_{\jj\in J},\vec{u},p)}
    &= \left[ \normENporoel{(\vec{d},\{p_\jj\}_{\jj\in J})}^2+ \normENfluid{(\vec{u},p)}^2\right]^{1/2}.
\end{align}

For the sake of simplicity, in the inequalities appearing in the following analysis, we neglect the dependencies on the model parameters and the polynomial degree $m$, and we use the notation $x\lesssim y$ to indicate that $\exists C>0:x\leq C y$, where $C$ is independent of the mesh discretization parameters.
Moreover, we make the following assumption on the mesh \cite{antonietti2016review,cangiani2017book}:
\begin{hp}\label{hp:mesh}
For each $h>0$, the two meshes $\mathscr T_{h,\PP},\mathscr T_{h,\FF}$ are aligned with $\Sigma$, namely there is no gap nor overlap between them (hanging nodes are allowed).
Moreover, denoting by $\mathscr T_h$ the union of $\mathscr T_{h,\PP}$ and $\mathscr T_{h,\FF}$, we consider a sequence of meshes $\{\mathscr T_h\}_{h>0}$ satisfying the regularity requirements of \cite
{corti2022numerical}:
\begin{itemize}
    \item
    $\{\mathscr T_h\}_{h>0}$ is $h$-uniformly polytopic-regular, namely for each $K\in\mathscr T_h$ there exists a set $\{S_K^F\}_{F\subset\partial K}$ of non-overlapping $d$-dimensional simplices contained in $\overline{K}$ such that
    \[
    \forall F\subset \partial K \colon \overline{F} = \overline{\partial K}\cap\overline{S_K^F} \text{ it holds that } h_K\lesssim d|S_K^F||F|^{-1},
    \]
    where $h_K$ is the diameter of $K$, and the cardinality of $\{S_K^F\}_{F\subset\partial K}$ does not depend on $h>0$.
    \item
    There exists a shape-regular simplicial covering $\widehat{\mathscr T}_h$ of $\mathscr T_h$ such that, for each $K\in\mathscr T_h, \widehat{K}\in\widehat{\mathscr T}_h$ with $\widehat{K}\subseteq K$,
    \[
    h_K\lesssim h_{\widehat{K}},
    \max_{K\in\mathscr T_h}{\rm card}\{K'\in\mathscr T_h \colon \exists \widehat{K}\in\widehat{\mathscr T}_h \text{ such that } K\subset\widehat{K} \text{ and } K'\cap\widehat{K}\neq\emptyset\}\lesssim 1.
    \]
    \item
    The mesh size satisfies a local bounded variation property:
    \[
    \forall K_1,K_2\in\mathscr T_h\colon|\partial K_1\cap\partial K_2|_{d-1}>0 \qquad h_{K_1}\lesssim h_{K_2} \lesssim h_{K_1},
    \]
    with $|\cdot|_{d-1}$ denotes the $(d-1)$-dimensional measure and all the hidden constants independent of $h$ and the number of faces of $K_1$ and $K_2$.
\end{itemize}
\end{hp}

Under these assumptions, we can prove the following stability result:
\begin{theorem}[Stability estimate]\label{th:stab}
Let \cref{hp:mesh} hold true and let us also assume that 
the penalty constants defined in \eqref{eq:penaltyparams} are chosen sufficiently large.
Then, the semidiscrete solution $(\vec{d}_h,\{p_{\jj,h}\}_{\jj\in J},\vec{u}_h,p_h)$ of \eqref{eq:DG} satisfies the following inequality for each $t\in(0,T]$:
\begin{equation}\label{eq:stab}\begin{aligned}
    \normEN{(\vec{d}_h,\{p_{\jj,h}\}_{\jj\in J},\vec{u}_h,p_h)} \lesssim
    & \normENzero{(\vec{d}_h,\{p_{\jj,h}\}_{\jj\in J},\vec{u}_h,0)}
    \\& + \int_0^t\left(\frac{1}{\sqrt{\rho_\PP}}\|\vec{f}_\PP\|_{\Omega_\PP}+\sum_{\jj\in J}\frac{1}{\sqrt{c_\jj}}\|g_\jj\|_{\Omega_\PP}+\frac{1}{\sqrt{\rho_\FF}}\|\vec{f}_\FF\|_{\Omega_\FF}\right)ds,
\end{aligned}\end{equation}
where, according to the initial conditions of \eqref{eq:NSMPE},
\[
\normENzero{(\vec{d}_h,\{p_{\jj,h}\}_{\jj\in J},\vec{u}_h,0)} = \left[\|\sqrt{\rho_\PP}\dot{\vec{d}}_h^0\|_{\Omega_\PP}^2 + \normDGd{\vec{d}_h^0}^2 + \sum_{\jj\in J}\|\sqrt{c_\jj}p_{\jj,h}^0\|_{\Omega_\PP}^2 + \|\sqrt{\rho_\FF}\vec{u}_h^0\|_{\Omega_\FF}^2\right]^{1/2}.
\]
\end{theorem}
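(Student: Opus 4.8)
The plan is to derive a discrete energy identity by testing the semidiscrete system \eqref{eq:DG} with the solution itself, taking $\vec{w}_h=\partial_t\vec{d}_h$, $q_{\jj,h}=p_{\jj,h}$ for every $\jj\in J$, $\vec{v}_h=\vec{u}_h$, and $q_h=p_h$. With this choice the inertial and symmetric elastic contributions collapse into time derivatives of energy: $(\rho_\PP\partial_{tt}^2\vec{d}_h,\partial_t\vec{d}_h)_{\Omega_\PP}=\tfrac12\frac{d}{dt}\|\sqrt{\rho_\PP}\partial_t\vec{d}_h\|_{\Omega_\PP}^2$, and, using the symmetry of the interior-penalty form, $\mathcal A_\PP(\vec{d}_h,\partial_t\vec{d}_h)=\tfrac12\frac{d}{dt}\mathcal A_\PP(\vec{d}_h,\vec{d}_h)$, while $(c_\jj\partial_t p_{\jj,h},p_{\jj,h})_{\Omega_\PP}=\tfrac12\frac{d}{dt}\|\sqrt{c_\jj}p_{\jj,h}\|_{\Omega_\PP}^2$ and $(\rho_\FF\partial_t\vec{u}_h,\vec{u}_h)_{\Omega_\FF}=\tfrac12\frac{d}{dt}\|\sqrt{\rho_\FF}\vec{u}_h\|_{\Omega_\FF}^2$. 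The decisive structural feature is that every skew coupling cancels: the term $\sum_{\kk}\mathcal B_\kk(p_{\kk,h},\partial_t\vec{d}_h)$ coming from the momentum block is annihilated by $-\sum_{\jj}\mathcal B_\jj(p_{\jj,h},\partial_t\vec{d}_h)$ from the pressure blocks; the mixed fluid pair $\mathcal B_\FF(p_h,\vec{u}_h)-\mathcal B_\FF(p_h,\vec{u}_h)$ vanishes; and the interface contributions $\mathcal J(p_{\EE,h},\partial_t\vec{d}_h,\vec{u}_h)-\mathcal J(p_{\EE,h},\partial_t\vec{d}_h,\vec{u}_h)$ cancel identically, which is exactly the discrete counterpart of the physically consistent energy balance underlying \cref{rem:J}.

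Next I would invoke coercivity of the symmetric interior-penalty forms. For the penalty constants $\overline{\eta},\overline{\zeta}_\jj,\overline{\gamma}_\vec{v}$ chosen large enough, the trace–inverse estimates valid under \cref{hp:mesh} give $\mathcal A_\PP(\vec{d}_h,\vec{d}_h)\gtrsim\normDGd{\vec{d}_h}^2$, $\mathcal A_\jj(p_{\jj,h},p_{\jj,h})\gtrsim\normDGj{p_{\jj,h}}^2$, and $\mathcal A_\FF(\vec{u}_h,\vec{u}_h)\gtrsim\normDGu{\vec{u}_h}^2$, whereas $\mathcal S(p_h,p_h)=\|\sqrt{\gamma_p}\jump{p_h}\|^2$ is the pressure-jump seminorm. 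Under the usual symmetry and positivity assumption on the exchange coefficients $\beta_{\jj\kk}$, the compartment-coupling sum satisfies $\sum_\jj\mathcal C_\jj(\{p_{\kk,h}\}_\kk,p_{\jj,h})\geq\sum_\jj\|\sqrt{\beta_\jj^\text{e}}p_{\jj,h}\|_{\Omega_\PP}^2$, so the inter-network part is nonnegative and the external-coupling part is precisely the term appearing in $\normENporoel{\cdot}$. The load functionals are bounded by Cauchy--Schwarz together with weighted Young inequalities, for instance $\mathcal F_\PP(\partial_t\vec{d}_h)\leq\tfrac{1}{\sqrt{\rho_\PP}}\|\vec{f}_\PP\|_{\Omega_\PP}\,\|\sqrt{\rho_\PP}\partial_t\vec{d}_h\|_{\Omega_\PP}$, and analogously for $\mathcal F_\jj$ and $\mathcal F_\FF$. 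Integrating over $(0,t)$ leaves on the left the quantity controlling $\|\sqrt{\rho_\PP}\partial_t\vec{d}_h(t)\|^2+\normDGd{\vec{d}_h(t)}^2+\sum_\jj\|\sqrt{c_\jj}p_{\jj,h}(t)\|^2+\|\sqrt{\rho_\FF}\vec{u}_h(t)\|^2$ plus the time-integrated dissipation $\int_0^t\bigl(\sum_\jj\normDGj{p_{\jj,h}}^2+\sum_\jj\|\sqrt{\beta_\jj^\text{e}}p_{\jj,h}\|^2+\normDGu{\vec{u}_h}^2+\|\sqrt{\gamma_p}\jump{p_h}\|^2\bigr)ds$. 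Bounding each load integral by $\sup_{[0,t]}\|\cdot\|$ times the corresponding forcing integral and then taking the supremum in time gives a quadratic inequality of the type $X^2\lesssim\normENzero{\cdot}^2+X\,G(t)$, where $G(t)$ is the forcing integral appearing on the right of \eqref{eq:stab}; a standard Gr\"onwall-type absorption then yields $X\lesssim\normENzero{\cdot}+G(t)$.

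The single component of $\normEN{\cdot}$ not yet produced is the interior pressure contribution $\int_0^t\|p_h\|_{L^2(\Omega_\FF)}^2\,ds$ hidden in $\normDGp{\cdot}$, since the energy identity only delivers the pressure-jump seminorm through $\mathcal S$. I expect this to be the \emph{main obstacle}. I would recover it from the (stabilized) inf-sup stability of the equal-order DG Stokes pair $(\spaceV,\spaceQ)$, whose deficiency is compensated precisely by the stabilization $\mathcal S$: for each $p_h$ there exists $\vec{v}_h\in\spaceV$ with $\mathcal B_\FF(p_h,\vec{v}_h)\gtrsim\|p_h\|_{\Omega_\FF}^2$ and $\normDGu{\vec{v}_h}\lesssim\|p_h\|_{\Omega_\FF}$. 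Testing only the fluid-momentum block of \eqref{eq:DG} against this $\vec{v}_h$ expresses $\mathcal B_\FF(p_h,\vec{v}_h)$ in terms of $\mathcal F_\FF(\vec{v}_h)$, $\mathcal A_\FF(\vec{u}_h,\vec{v}_h)$, the interface contribution $\mathcal J(p_{\EE,h},\vec{0},\vec{v}_h)$ — dominated by the already-bounded $\normDGE{p_{\EE,h}}$ via an interface trace inequality — and the inertial term $(\rho_\FF\partial_t\vec{u}_h,\vec{v}_h)_{\Omega_\FF}$. This last term is the genuinely delicate point, because $\partial_t\vec{u}_h$ is absent from $\normEN{\cdot}$; it must be dominated using the bound on $\sup_{[0,t]}\|\sqrt{\rho_\FF}\vec{u}_h\|$ obtained above together with the momentum equation itself, after which squaring, integrating in time, and reusing the dissipation estimates of the previous step give $\int_0^t\|p_h\|_{\Omega_\FF}^2\,ds\lesssim\normENzero{\cdot}^2+G(t)^2$. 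Collecting all contributions then yields \eqref{eq:stab}.
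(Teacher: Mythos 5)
Your energy argument coincides with the paper's own proof: the same diagonal test functions $(\partial_t\vec{d}_h,\{p_{\jj,h}\}_{\jj\in J},\vec{u}_h,p_h)$, the same cancellation of the $\mathcal B_\kk$/$\mathcal B_\FF$ couplings and of the two $\mathcal J$ terms, the same integration in time producing the kinetic, elastic, storage, and fluid energies, the coercivity bounds of \cref{th:contcoerc} for $\mathcal A_\PP,\mathcal A_\jj,\mathcal C_\jj$, and the same Cauchy--Schwarz/Young treatment of the loads followed by absorption so that the forcing enters linearly (your $X^2\lesssim\normENzero{\cdot}^2+X\,G(t)$ step; the paper is terser here and leans on \cite{corti2022numerical}, but the mechanism is identical). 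Up to the point where only the jump seminorm $\mathcal S(p_h,p_h)$ of the pressure is produced, your proposal is correct and essentially reproduces the paper's argument, and you correctly identify the recovery of $\int_0^t\|p_h\|_{L^2(\Omega_\FF)}^2\,ds$ as the main obstacle.

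It is in that last step that there is a genuine gap. First, the plain inf-sup you postulate --- for each $p_h$ a $\vec{v}_h\in\spaceV$ with $\mathcal B_\FF(p_h,\vec{v}_h)\gtrsim\|p_h\|_{\Omega_\FF}^2$ and $\normDGu{\vec{v}_h}\lesssim\|p_h\|_{\Omega_\FF}$ --- is not available (and in general fails) for the equal-order pair $(\spaceV,\spaceQ)$; the paper's \cref{th:contcoerc} provides only the \emph{stabilized} version $\sup_{\vec{v}_h}\mathcal B_\FF(q,\vec{v}_h)/\normDGu{\vec{v}_h}+\sqrt{\mathcal S(q,q)}\geq\beta_{\FF,h}\|q\|_{\Omega_\FF}$, so the jump term cannot be dropped from the construction. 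Second, and more seriously, your handling of the inertial term $(\rho_\FF\partial_t\vec{u}_h,\vec{v}_h)_{\Omega_\FF}$ does not close: the energy identity controls $\sup_{s\in[0,t]}\|\sqrt{\rho_\FF}\vec{u}_h(s)\|_{\Omega_\FF}$ but gives no bound on $\partial_t\vec{u}_h$ in any norm, and bounding $\partial_t\vec{u}_h$ ``using the momentum equation itself'' reintroduces $\|p_h\|_{\Omega_\FF}$ on the right-hand side, so the argument as sketched is circular unless constants are tracked and an explicit absorption is carried out --- which you do not do. The paper avoids this difficulty altogether: as flagged in the footnote of its proof, it deliberately never decouples the fluid momentum row, keeps the complete form $\mathcal L_\FF(\vec{u}_h,p_h;\vec{u}_h,p_h)$ in the diagonally tested identity, and invokes the generalized inf-sup result of \cite{AMVZ22}, which yields $\mathcal L_\FF(\vec{u},p;\vec{u},p)\geq\alpha\,(\normDGu{\vec{u}}^2+\normDGp{p}^2)$ with $\alpha=\mathcal O(\beta_{\FF,h}^{-2})$, hence the time-integrated full pressure norm, without ever estimating $\partial_t\vec{u}_h$. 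To repair your proof, either cite that result as the paper does, or carry out the inf-sup testing with the stabilized condition together with a genuine absorption argument for the inertial contribution --- the latter being precisely the nontrivial content that \cite{AMVZ22} encapsulates.
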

\begin{proof}
Let us fix a time $t\in (0,T]$ and consider the following test functions in \eqref{eq:weak}: $\vec{w}=\partial_t\vec{d}(t), \vec{v}=\vec{u}(t), q=p(t), q_\jj=p_\jj(t)\ \forall\jj\in J$.
With this choice of test functions, the terms $\mathcal J$ cancel out, as well as the $\mathcal B$ forms in the $\mathcal L$ terms
\footnote{In the next steps, although the $\mathcal B_\FF$ terms cancel out due to the choice of test functions, we keep writing the complete form $\mathcal L_\FF$ to facilitate the application of a generalized inf-sup condition (see \cref{sec:contcoerc}, \cref{th:contcoerc}).}
.
Therefore, \eqref{eq:weak} becomes
\begin{equation}\label{eq:proofstab1}\begin{aligned}
(\rho_\PP\partial_{tt}^2&\vec{d}(t),\partial_t\vec{d}(t))_{\Omega_\PP} + \mathcal A_\PP(\vec{d}(t),\partial_t\vec{d}(t))
\\&\quad
+ \sum_{\jj\in J}\left[ (c_\jj\partial_t p_\jj(t),p_\jj(t))_{\Omega_\PP} + \mathcal A_\jj(p_\jj(t),p_\jj(t)) +\mathcal C_\jj(\{p_\kk(t)\}_{\kk\in J},p_\jj(t)) \right]
\\&\quad+ (\rho_\FF\partial_t\vec{u}(t),\vec{u}(t))_{\Omega_\FF}
+ \mathcal L_\FF(\vec{u}(t),p(t);\vec{u}(t),p(t))
\\&
= \mathcal F_\PP(\partial_t\vec{d}(t))
+ \sum_{\jj\in J} \mathcal F_\jj(p_\jj(t))
+ \mathcal F_\FF(\vec{u}(t)).
\end{aligned}\end{equation}
Proceeding as in \cite{corti2022numerical},
we integrate \eqref{eq:proofstab1} w.r.t.~time and we can employ integration by parts in time in the following terms
to obtain
\[\begin{aligned}
\int_0^t (\rho_\PP\partial_{tt}^2\vec{d}(s),\partial_t\vec{d}(s))_{\Omega_\PP} ds
&= \frac{1}{2}\|\sqrt{\rho_\PP}\partial_t\vec{d}(t)\|_{\Omega_\PP}^2
- \frac{1}{2}\|\sqrt{\rho_\PP}\dot{\vec{d}}^0\|_{\Omega_\PP}^2, \\
\int_0^t \mathcal A_\PP(\vec{d}(s),\partial_t\vec{d}(s)) ds
&= \frac{1}{2} \mathcal A_\PP(\vec{d}(t),\vec{d}(t)) - \frac{1}{2} \mathcal A_\PP(\vec{d}^0,\vec{d}^0)\\
\int_0^t (c_\jj\partial_tp_\jj(s),p_\jj(s))_{\Omega_\PP} ds
&= \frac{1}{2}\|\sqrt{c_\jj}p_\jj(t)\|_{\Omega_\PP}^2
- \frac{1}{2}\|\sqrt{c_\jj}p_\jj^0\|_{\Omega_\PP}^2, \qquad\forall\jj\in J, \\
\int_0^t (\rho_\FF\partial_t\vec{u}(s),\vec{u}(s))_{\Omega_\FF} ds
&= \frac{1}{2}\|\sqrt{\rho_\FF}\vec{u}(t)\|_{\Omega_\FF}^2
- \frac{1}{2}\|\sqrt{\rho_\FF}\vec{u}^0\|_{\Omega_\FF}^2.
\end{aligned}\]
Using these identities and the definition of the bulk and broken norms introduced above, we can rewrite the left-hand side of \eqref{eq:proofstab1} (integrated over time) as follows, and use Cauchy-Schwarz's and Young's inequalities on the right-hand side:
\begin{equation}\label{eq:proofstab2}\begin{aligned}
\|&\sqrt{\rho_\PP}\partial_t\vec{d}(t)\|_{\Omega_\PP}^2
 \|\sqrt{\rho_\PP}\dot{\vec{d}}^0\|_{\Omega_\PP}^2
+ \mathcal A_\PP(\vec{d}(t),\vec{d}(t)) - \mathcal A_\PP(\vec{d}^0,\vec{d}^0)
\\&\quad+ \sum_{\jj\in J}\left[\|\sqrt{c_\jj}p_\jj(t)\|_{\Omega_\PP}^2 - \|\sqrt{c_\jj}p_\jj^0\|_{\Omega_\PP}^2
+ 2\int_0^t\left(\mathcal A_\jj(p_\jj(s),p_\jj(s)) +\mathcal C_\jj(\{p_\kk(s)\}_{\kk\in J},p_\jj(s)) \right)
ds\right]
\\&\quad+ \|\sqrt{\rho_\FF}\vec{u}(t)\|_{\Omega_\FF}^2 - \|\sqrt{\rho_\FF}\vec{u}^0\|_{\Omega_\FF}^2
+ 2\int_0^t\mathcal L_\FF(\vec{u}(s),p(s);\vec{u}(s),p(s))ds
\\& \leq \int_0^t\left(\left\|\frac{1}{\sqrt{\rho_\PP}}\vec{f}_\PP(s)\right\|_{\Omega_\PP}^2+\|\sqrt{\rho_\PP}\partial_t\vec{d}(s)\|_{\Omega_\PP}^2 +\sum_{\jj\in J}\left\|\frac{1}{\sqrt{c_\jj}}g_\jj(s)\right\|_{\Omega_\PP}^2
\right.\\&\left .\qquad\qquad +\sum_{\jj\in J}\|\sqrt{c_\jj}p_\jj(s)\|_{\Omega_\PP}^2
+ \left\|\frac{1}{\sqrt{\rho_\FF}}\vec{f}_\FF(s)\right\|_{\Omega_\FF}^2+\|\sqrt{\rho_\FF}\vec{u}(s)\|_{\Omega_\FF}^2\right)ds.
\end{aligned}\end{equation}

We now consider continuity and coercivity results for the bilinear forms appearing in \eqref{eq:proofstab2} with respect to the broken norms \eqref{eq:brokennorms}.
These results, proven in \cite{corti2022numerical,AMVZ22}, are reported in \cref{sec:contcoerc} (\cref{th:contcoerc}) and they include an inf-sup condition for the form $\mathcal B_\FF$, with a constant $\beta_{\FF,h}$ independent of the mesh elements size.
According to \cite
{AMVZ22}, if $\gamma_\vec{v}$ is large enough, there exists $\alpha>0$ such that $\alpha=\mathcal O(\beta_{\FF,h}^{-2})$ and
\[
\mathcal L_\FF(\vec{u}(s),p(s);\vec{u}(s),p(s)) \geq \alpha (\normDGu{\vec{u}(s)}^2 + \normDGp{p(s)}^2).
\]
Analogously, the coercivity results of the forms $\mathcal{A}_\PP, \mathcal A_\jj, \mathcal C_\jj$ and the continuity of $\mathcal A_\PP$ yield \cite
{corti2022numerical}
\[\begin{aligned}
\mathcal A_\PP(\vec{d}(t),&\vec{d}(t)) - \mathcal A_\PP(\vec{d}^0,\vec{d}^0) + 2\sum_{\jj\in J}\int_0^t\left(\mathcal A_\jj(p_\jj(s),p_\jj(s)) +\mathcal C_\jj(\{p_\kk(s)\}_{\kk\in J},p_\jj(s)) \right) ds
\\& \gtrsim \normDGd{\vec{d}(t)}^2 -\normDGd{\vec{d}^0}^2 + \sum_{\jj\in J}\int_0^t\left(\normDGj{p_\jj(s)}^2+\|\sqrt{\beta_\jj^\text{e}}p_\jj(s)\|_{\Omega_\PP}^2\right) ds.
\end{aligned}\]
We can use these results on the left-hand side of \eqref{eq:proofstab2} to obtain
\[
\begin{aligned}
\|\sqrt{\rho_\PP}\partial_t\vec{d}(t)\|_{\Omega_\PP}^2
&+ \normDGd{\vec{d}(t)}^2 + \sum_{\jj\in J}\left[\|\sqrt{c_\jj}p_\jj(t)\|_{\Omega_\PP}^2 + \int_0^t\left(\normDGj{p_\jj(s)}^2+\|\sqrt{\beta^\text{e}_\jj}p_\jj(s)\|_{\Omega_\PP}^2\right)ds\right]
\\& \qquad -\|\sqrt{\rho_\PP}\dot{\vec{d}}^0\|_{\Omega_\PP}^2 - \normDGd{\vec{d}^0}^2 - \sum_{\jj\in J}\|\sqrt{c_\jj}p_\jj^0\|_{\Omega_\PP}^2
\\& \qquad
+ \normENfluid{(\vec{u}(t),p(t))}^2
- \|\sqrt{\rho_\FF}\vec{u}^0\|_{\Omega_\FF}^2
\\& \lesssim \int_0^t\left(\left\|\frac{1}{\sqrt{\rho_\PP}}\vec{f}_\PP(s)\right\|_{\Omega_\PP}^2
+ \sum_{\jj\in J}\left\|\frac{1}{\sqrt{c_\jj}}g_\jj(s)\right\|_{\Omega_\PP}^2
+ \left\|\frac{1}{\sqrt{\rho_\FF}}\vec{f}_\FF(s)\right\|_{\Omega_\FF}^2\right) ds.
\end{aligned}\]
The definitions of $\normEN{\cdot}$ and $\normENzero{\cdot}$ and the fact that $\rho_\PP,c_\jj,\rho_\FF$ are constant conclude the proof.
\end{proof}

We now proceed to derive an a-priori error estimate for the error introduced by the space discretization of the problem.
For any $t\in(0,T]$, let $(\vec{d},\{p_\jj\}_{\jj\in J},\vec{u},p)$ 
denote the weak solution of problem \eqref{eq:weak} and let
$(\vec{d}_h,\{p_{\jj,h}\}_{\jj\in J},\vec{u}_h,p_h)$
denote the semidiscrete solution of problem \eqref{eq:DG}, obtained with sufficiently large stability parameters as defined in \eqref{eq:penaltyparams}.

Using the inverse trace inequality \cite{antonietti2013hp,antonietti2016review}
\begin{equation}\label{eq:inversetrace}
    \|\phi\|_{\partial K}^2 \lesssim
    h_K^{-1}
    \|\phi\|_K^2 \qquad \forall \phi\in \mathbb P^m(K), K\in\mathscr T_h,
\end{equation}
we can state the following continuity result for the interface forms, whose proof is provided in \cref{sec:proofnewcontJ}:
\begin{lemma}\label{th:newcontJ}
    Under \cref{hp:mesh},
    the following inequalities hold:
    \[
    \begin{aligned}
    |\mathcal J(q,\vec{w}_h,\vec{v}_h)| & \lesssim \|\eta^{1/2}q\|_{\mathscr F_h^\Sigma}\normDGd{\vec{w}_h} + \|\gamma_p^{-1/2}q\|_{\mathscr F_h^\Sigma}\normDGu{\vec{v}_h}
    \\&\qquad\qquad
    \forall q\in H^2(\mathscr T_{h,\PP}), \vec{w}_h\in\spaceW, \vec{v}_h\in\spaceV,\\[1ex]
    |\mathcal J(q_h,\vec{w},\vec{v})| & \lesssim \normDGp{q_h}\|\eta^{1/2}\vec{w}\|_{\mathscr F_h^\Sigma} + \normDGp{q_h}\|\gamma_\vec{v}^{1/2}\vec{v}\|_{\mathscr F_h^\Sigma}
    \\&\qquad\qquad
    \forall  q_h\in\spaceQE, \vec{w}\in [H^2(\mathscr T_{h,\FF})]^d, \vec{v}\in [H^2(\mathscr T_{h,\FF})]^d.
    \end{aligned}
    \]
\end{lemma}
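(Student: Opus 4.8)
The plan is to estimate both interface forms by a face-by-face Cauchy--Schwarz argument, inserting the penalty weights so that the ``continuous'' ($H^2$) factor is measured in exactly the weighted $\mathscr F_h^\Sigma$-norm appearing on the right-hand side, and trading the boundary norm of the ``discrete'' factor for a bulk element norm via the inverse trace inequality \eqref{eq:inversetrace}. First I would make the form explicit: using $\average{p_\EE I}=p_\EE I|_{K_\PP}$, $\jump{\vec{w},\vec{v}}=\vec{w}|_{K_\PP}\odot\vec{n}_\PP+\vec{v}|_{K_\FF}\odot\vec{n}_\FF$ and $I\colon(\vec{a}\odot\vec{b})=\vec{a}\cdot\vec{b}$, the interface form splits into a poroelastic and a fluid contribution,
\[
\mathcal J(q,\vec{w},\vec{v})=\sum_{F\in\mathscr F^\Sigma}\int_F q\,(\vec{w}\cdot\vec{n}_\PP)+\sum_{F\in\mathscr F^\Sigma}\int_F q\,(\vec{v}\cdot\vec{n}_\FF),
\]
so that the two contributions can be bounded independently, each summand by a product of $L^2(F)$-norms.

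For the first estimate the scalar $q\in H^2(\mathscr T_{h,\PP})$ is the continuous factor and $\vec{w}_h,\vec{v}_h$ are discrete. On the poroelastic contribution I would write $\|q\|_F\,\|\vec{w}_h\|_F=\|\eta^{1/2}q\|_F\,\|\eta^{-1/2}\vec{w}_h\|_F$; since $\eta\simeq h^{-1}$ up to the bounded factors in \eqref{eq:penaltyparams}, the weight $\eta^{-1}$ cancels exactly the $h^{-1}$ generated by applying \eqref{eq:inversetrace} to the discrete $\vec{w}_h$, leaving $\|\eta^{-1/2}\vec{w}_h\|_F\lesssim\|\vec{w}_h\|_{K_\PP}$. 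On the fluid contribution I would instead pair $\gamma_p^{-1/2}$ with $q$ (the weight requested on the right-hand side) and $\gamma_p^{1/2}$ with $\vec{v}_h$; since $\gamma_p\simeq h$, the $h$-powers again match and \eqref{eq:inversetrace} yields $\|\gamma_p^{1/2}\vec{v}_h\|_F\lesssim\|\vec{v}_h\|_{K_\FF}$.

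Summing over $F\in\mathscr F^\Sigma$ and using discrete Cauchy--Schwarz, the continuous factors assemble into $\|\eta^{1/2}q\|_{\mathscr F_h^\Sigma}$ and $\|\gamma_p^{-1/2}q\|_{\mathscr F_h^\Sigma}$, while the discrete factors assemble into the bulk norms $\|\vec{w}_h\|_{L^2(\Omega_\PP)}$ and $\|\vec{v}_h\|_{L^2(\Omega_\FF)}$. To reach the stated bound I would then invoke a \emph{discrete Poincar\'e--Korn inequality} (valid under \cref{hp:mesh} and the homogeneous Dirichlet conditions encoded in $\spaceW,\spaceV$) to control $\|\vec{w}_h\|_{L^2(\Omega_\PP)}\lesssim\normDGd{\vec{w}_h}$ and $\|\vec{v}_h\|_{L^2(\Omega_\FF)}\lesssim\normDGu{\vec{v}_h}$. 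The second estimate follows by the same mechanism with the roles exchanged: now $q_h$ is the discrete factor and is handled by \eqref{eq:inversetrace}, but since $\normDGp{\cdot}$ already contains a full $L^2$-norm contribution, the bulk term $\|q_h\|_{L^2}$ is controlled directly by $\normDGp{q_h}$ with no Poincar\'e step, whereas the continuous fields $\vec{w},\vec{v}$ simply contribute their weighted boundary norms $\|\eta^{1/2}\vec{w}\|_{\mathscr F_h^\Sigma}$ and $\|\gamma_{\vec{v}}^{1/2}\vec{v}\|_{\mathscr F_h^\Sigma}$.

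The main obstacle I anticipate is essentially the bookkeeping of the $h$-powers: one must verify from the explicit scalings in \eqref{eq:penaltyparams} that each penalty weight cancels precisely the factor produced by \eqref{eq:inversetrace}, so that no residual power of $h$ survives. The only genuinely nontrivial ingredient is the discrete Poincar\'e--Korn inequality required in the first estimate to pass from the bulk $L^2$-norms of the discrete vector fields to the energy norms $\normDGd{\cdot}$ and $\normDGu{\cdot}$; the second estimate avoids this step entirely because $\normDGp{\cdot}$ bounds the $L^2$-norm of the discrete scalar directly.
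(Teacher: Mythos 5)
Your proposal is correct and follows essentially the same route as the paper's proof: face-by-face Cauchy--Schwarz with the penalty weights $\eta^{\pm 1/2}$, $\gamma_p^{\pm 1/2}$ (resp.\ $\gamma_{\vec{v}}^{\pm 1/2}$) inserted so that, by the scalings in \eqref{eq:penaltyparams}, the inverse trace inequality \eqref{eq:inversetrace} applied to the discrete factor cancels the $h$-powers exactly, followed by assembly into the weighted $\mathscr F_h^\Sigma$-norms. The only difference is that you explicitly invoke the discrete Poincar\'e--Korn inequality to pass from $\|\vec{w}_h\|_{L^2(\Omega_\PP)}$, $\|\vec{v}_h\|_{L^2(\Omega_\FF)}$ to $\normDGd{\vec{w}_h}$, $\normDGu{\vec{v}_h}$ in the first estimate, a step the paper performs silently in its final line, so your write-up is if anything slightly more complete.
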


We introduce the following additional norms for non-discrete functions:
\[\begin{aligned}&\begin{aligned}
\normcontD{\vec{w}}^2 &= \normDGd{\vec{w}}^2 + \|\eta^{-1/2}\average{\sigma_\PP(\vec{w})}\|_{\mathscr F_{\PP,h}^\text{I}\cup\mathscr F_{\PP,h}^\text{D}}^2
&\qquad\forall \vec{w}\in [H^2(\mathscr T_{h,\PP})]^d,\\
\normcontJ{q_\jj}^2 &= \normDGj{q_\jj}^2 + \|\zeta^{-1/2}\average{\frac{1}{\mu_\jj}K_\jj\nabla_hq_\jj}\|_{\mathscr F_{\PP,h}^\text{I}\cup\mathscr F_{\PP,h}^{\text{D}_\jj}}^2
&\qquad\forall q_\jj\in H^2(\mathscr T_{h,\PP}),
\qquad \forall\jj\in J,\\
\normcontU{\vec{v}}^2 &= \normDGu{\vec{v}}^2 + \|\gamma_{\vec{v}}^{-1/2}\average{\sigma_\FF(\vec{v})}\|_{\mathscr F_{\FF,h}^\text{I}\cup\mathscr F_{\FF,h}^\text{D}}^2
&\qquad\forall \vec{v}\in [H^2(\mathscr T_{h,\FF})]^d,\\
\normcontP{q}^2 &= \normDGp{q}^2 + \|{\gamma_p^{1/2}}\average{q}\|_{\mathscr F_{\FF,h}^\text{I}}^2
&\qquad\forall q\in H^1(\mathscr T_{h,\FF}),
\end{aligned}\\
&\normcont{(\vec{w},\{q_\jj\}_{\jj\in J},\vec{v},p)}^2 = \normcontD{\vec{w}}^2+\sum_{\jj\in J}\normcontJ{q_\jj}^2 + \normcontU{\vec{v}}^2+\normcontP{q}^2.
\end{aligned}\]

Denoting by $\mathscr E_K: H^s(\Omega)\to H^s(\mathbb R^d)$ the Stein extension operator for a Lipschitz domain $\Omega$ defined in \cite{stein1970singular},
the following interpolation result can be stated
(cf.~\cite{corti2022numerical,AMVZ22,antonietti2023discontinuous,perTraceInterp}):
\begin{lemma}\label{th:interp}
Under
\cref{hp:mesh},
the following estimates hold:
\[\begin{aligned}
    &\forall (\vec{w},\{q_\jj\}_{\jj\in J},\vec{v},q)\in [H^{m+1}(\mathscr T_{h,\PP})]^{d+N_J}\times[H^{m+1}(\mathscr T_{h,\FF})]^{d+1}\\
    &\exists (\vec{w}_I,\{q_{\jj I}\}_{\jj\in J},\vec{v}_I,q_I)\in\spaceW\times\bigtimes_{\jj\in J}\spaceQj\times\spaceV\times\spaceQ \quad \text{such that}
    \\
    &i)\ \normcont{(\vec{w}-\vec{w}_I,\{q_\jj-q_{\jj I}\}_{\jj\in J},\vec{v}-\vec{v}_I,q-q_I)}^2 \\
    &\qquad\lesssim \sum_{K\in\mathscr T_{h,\PP}}h_K^{2m}\left(\|\mathcal E_K\vec{w}\|_{[H^{m+1}(\widehat{K})]^d}^2+\sum_{\jj\in J}\|\mathcal E_\jj q_\jj\|_{H^{m+1}(\widehat{K})}^2+\|\mathcal E_K\vec{d}\|_{[H^{m+1}(\widehat{K})]^d}^2+\|\mathcal E_Kp\|_{H^{m+1}(\widehat{K})}^2\right), \\
    &ii)\ \|\vec{w}\|_{\mathscr F^\Sigma}^2+\sum_{\jj\in J}\|q_\jj\|_{\mathscr F^\Sigma}^2 + \|\vec{v}\|_{\mathscr F^\Sigma}^2+\|q\|_{\mathscr F^\Sigma}^2\\
    &\qquad\lesssim \sum_{K\in\mathscr T_{h,\PP}}h_K^{2m+1}\left(\|\mathcal E_K\vec{w}\|_{[H^{m+1}(\widehat{K})]^d}^2+\sum_{\jj\in J}\|\mathcal E_\jj q_\jj\|_{H^{m+1}(\widehat{K})}^2+\|\mathcal E_K\vec{d}\|_{[H^{m+1}(\widehat{K})]^d}^2+\|\mathcal E_Kp\|_{H^{m+1}(\widehat{K})}^2\right),
\end{aligned}\]
where $\widehat{K}\supseteq K$, for each $K\in\mathscr T_h$, are shape-regular simplexes as in \cref{hp:mesh}.
\end{lemma}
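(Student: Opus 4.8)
The plan is to construct the interpolant componentwise by composing the Stein extension operator $\mathscr E_K$ with a classical $hp$-optimal polynomial approximation operator on the shape-regular simplicial covering $\widehat{\mathscr T}_h$ granted by \cref{hp:mesh}, and then to bound each term of the continuity norm $\normcont{\cdot}$ and each interface trace by the standard polytopal approximation estimates of \cite{cangiani2017book,corti2022numerical,AMVZ22}. Concretely, I would treat one scalar field at a time: given $\phi\in H^{m+1}(\mathscr T_{h,\PP})$ (resp.\ $H^{m+1}(\mathscr T_{h,\FF})$), on each element $K$ I extend $\phi|_K$ to the enclosing simplex $\widehat K\supseteq K$ through $\mathscr E_K$, apply the optimal $\mathbb P^m$-projection $\widetilde\Pi^m\colon H^{m+1}(\widehat K)\to\mathbb P^m(\widehat K)$, and restrict back, setting $\phi_I|_K:=(\widetilde\Pi^m\mathscr E_K\phi)|_K$. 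Applying this componentwise produces $\vec w_I,\vec v_I$ and hence an element of $\spaceW\times\bigtimes_{\jj\in J}\spaceQj\times\spaceV\times\spaceQ$. The two base ingredients are the bulk bound $\|\psi-\widetilde\Pi^m\psi\|_{H^q(K)}\lesssim h_K^{m+1-q}\|\mathscr E_K\psi\|_{H^{m+1}(\widehat K)}$ for $0\le q\le m+1$ (using $h_K\lesssim h_{\widehat K}$) and the polytopal trace inequality $\|\psi-\widetilde\Pi^m\psi\|_{L^2(F)}^2\lesssim h_K^{-1}\|\psi-\widetilde\Pi^m\psi\|_{L^2(K)}^2+h_K\|\nabla_h(\psi-\widetilde\Pi^m\psi)\|_{L^2(K)}^2$ for any face $F\subset\partial K$, both valid under \cref{hp:mesh}.

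To prove $i)$, I would estimate the volume and face contributions of each summand of $\normcont{\cdot}$ separately. The volume terms (the $L^2$-norms of $\varepsilon_h$ and $\nabla_h$ in \eqref{eq:brokennorms}, and the $L^2$-norm of $q$ in $\normDGp{\cdot}$) follow from the bulk bound with $q=1$ (and $q=0$ for the pressure), yielding the $h_K^{2m}$ rate at once. For the jump and lifting terms I would combine the trace inequality with the bulk bound, the crucial point being that the penalty scalings of \eqref{eq:penaltyparams}, namely $\eta,\zeta_\jj,\gamma_\vec v\sim h^{-1}$ and $\gamma_p\sim h$, recombine correctly. For example $\|\eta^{-1/2}\average{\sigma_\PP(\vec w-\vec w_I)}\|_{\mathscr F_{\PP,h}^\II\cup\mathscr F_{\PP,h}^\DD}^2\sim h\,\|\nabla_h(\vec w-\vec w_I)\|_F^2\lesssim\|\nabla_h(\vec w-\vec w_I)\|_K^2+h^2\|\nabla_h^2(\vec w-\vec w_I)\|_K^2$, both of which scale like $h_K^{2m}\|\mathscr E_K\vec w\|_{H^{m+1}(\widehat K)}^2$, while the jump term $\|\sqrt\eta\jump{\vec w-\vec w_I}\|^2\sim h^{-1}\|\vec w-\vec w_I\|_F^2$ scales identically. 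The analogous computations for $\normDGj{\cdot},\normcontJ{\cdot}$ (weight $\zeta_\jj$), $\normDGu{\cdot},\normcontU{\cdot}$ (weight $\gamma_\vec v$), and $\normDGp{\cdot},\normcontP{\cdot}$ (weight $\gamma_p$) reproduce the same $h_K^{2m}$ rate; summing over $K\in\mathscr T_h$ and invoking the finite-overlap property of the covering from \cref{hp:mesh} yields $i)$.

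For $ii)$ — read, as the right-hand side forces, as a bound on the interface $L^2$-norm of the interpolation error — a single application of the trace inequality on $\mathscr F^\Sigma$ without any penalty weight suffices: $\|\phi-\phi_I\|_F^2\lesssim h_K^{-1}\|\phi-\phi_I\|_K^2+h_K\|\nabla_h(\phi-\phi_I)\|_K^2\lesssim h_K^{2m+1}\|\mathscr E_K\phi\|_{H^{m+1}(\widehat K)}^2$. The extra half power of $h_K$ relative to $i)$ is exactly the effect of the missing $h^{-1}$-type penalty factor. Summing over the faces of $\mathscr F^\Sigma$ closes the estimate.

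I expect the only genuinely technical point to be the bookkeeping of the penalty scalings in $i)$: one must check that every face contribution reproduces precisely the $h_K^{2m}$ rate and that no stray negative power of $h_K$ survives the combination of trace and bulk bounds. Here the mesh regularity of \cref{hp:mesh} is essential — the local bounded-variation property and $h_K\lesssim h_{\widehat K}$ guarantee that the element-wise constants are uniform and that two elements sharing a face carry comparable mesh sizes, so that the two-sided averages and jumps can safely be estimated element by element.
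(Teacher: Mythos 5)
Your proposal is correct and reconstructs exactly the argument the paper relies on: the paper states \cref{th:interp} without proof, deferring to \cite{corti2022numerical,AMVZ22,antonietti2023discontinuous,perTraceInterp}, and those references establish the result precisely as you do --- composing the Stein extension $\mathscr E_K$ with an optimal $\mathbb P^m$-projection on the covering simplices $\widehat K$, then combining the bulk $hp$-approximation bounds with the polytopal trace inequality and checking that the penalty scalings of \eqref{eq:penaltyparams} yield the $h_K^{2m}$ rate for the face terms. Your reading of part $ii)$ as a bound on the interpolation errors at the interface (rather than on the functions themselves, as the statement's typo would suggest) is also the intended one, consistent with how the estimate is used in the proof of \cref{th:conv}.
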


Combining the results above, we can prove the following optimal convergence result, whose proof is provided in \cref{sec:proofConv}.
\begin{theorem}[A priori error estimate]\label{th:conv}
    Under the same assumptions of \cref{th:stab}, if the solution of problem \eqref{eq:weak}
is sufficiently regular,
    and the initial conditions $\vec{d}^0, \dot{\vec{d}}^0, \vec{u}^0, p_\jj^0,\jj\in J$
are sufficiently regular,
    the following estimate holds for each $t\in (0,T]$ and for each $m\geq 1$:
    \begin{equation}\label{eq:error}
    \begin{aligned}
    &
    \normEN{(\vec{e}^\vec{d},\{e^{P_\jj}\}_{\jj\in J},\vec{e}^\vec{u},e^{P_\FF})}^2
    \\ &\qquad
    \lesssim \sum_{K\in\mathscr T_{h,\PP}}
    {h_K^{2m}}\left\{
    \|\mathcal E_K\vec{d}(t)\|_{[H^{m+1}(\widehat{K})]^d}^2 + \sum_{\kk\in J}\|\mathcal E_K p_\kk(t)\|_{H^{m+1}(\widehat{K})}^2
    \right.
    \\ &\qquad
    \qquad\qquad\left.
    +\int_0^t\left[
    \|\mathcal E_K\partial_t\vec{d}(s)\|_{[H^{m+1}(\widehat{K})]^d}^2+\|\mathcal E_K\partial_{tt}^2\vec{d}(s)\|_{[H^{m+1}(\widehat{K})]^d}^2
    \right]ds
    \right.
    \\ &\qquad
    \qquad\qquad\left.
    +\int_0^t\left.
    \sum_{\kk\in J}\left(
    \|\mathcal E_Kp_\kk(s)\|_{H^{m+1}(\widehat{K})}^2+\|\mathcal E_K\partial_tp_\kk(s)\|_{H^{m+1}(\widehat{K})}^2
    \right)
    \right.ds
    \right\}
    \\ &\qquad
    \quad+ \sum_{K\in\mathscr T_{h,\FF}}
    {h_K^{2m}}\left.
    \int_0^t\left[
    \|\mathcal E_K\vec{u}(s)\|_{[H^{m+1}(\widehat{K})]^d}^2 + \|\mathcal E_K\partial_t\vec{u}(s)\|_{[H^{m+1}(\widehat{K})]^d}^2
    \right.\right.\\ &\qquad\qquad\qquad\qquad\qquad\left.\left.\phantom{\|^2_{H^{m+1}}}
    +\|\mathcal E_Kp(s)\|_{H^{m+1}(\widehat{K})}^2
    \right]ds,
    \right.
    \end{aligned}
    \end{equation}
    where $\vec{e}^\vec{d}=\vec{d}-\vec{d}_h, e^{P_\jj}=p_\jj-p_{\jj,h}\ \forall \jj\in J, \vec{e}^\vec{u}=\vec{u}-\vec{u}_h, e^{P_\FF}=p - p_h$,
and $\widehat{K}\supseteq K$, for each $K\in\mathscr T_h$, are shape-regular simplexes as in \cref{hp:mesh}.
\end{theorem}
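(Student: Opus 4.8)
The plan is to run the standard Strang-type argument for time-dependent PolyDG schemes, combining the stability estimate of \cref{th:stab} with the interpolation bounds of \cref{th:interp}. First I would split each error into an interpolation part and a discrete part by inserting the interpolants of \cref{th:interp}: writing $\vec{d}-\vec{d}_h=(\vec{d}-\vec{d}_I)+(\vec{d}_I-\vec{d}_h)=:\vec{e}_I^\vec{d}+\vec{e}_h^\vec{d}$, and analogously $e^{P_\jj}=e_I^{P_\jj}+e_h^{P_\jj}$, $\vec{e}^\vec{u}=\vec{e}_I^\vec{u}+\vec{e}_h^\vec{u}$, $e^{P_\FF}=e_I^{P_\FF}+e_h^{P_\FF}$. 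The interpolation parts are controlled directly by \cref{th:interp}, items (i) and (ii), so by the triangle inequality $\normEN{(\vec{e}^\vec{d},\{e^{P_\jj}\}_{\jj\in J},\vec{e}^\vec{u},e^{P_\FF})}\le\normEN{(\vec{e}_I^\vec{d},\dots)}+\normEN{(\vec{e}_h^\vec{d},\dots)}$ the crux is to estimate the discrete parts, which live in the PolyDG spaces.

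To this end I would exploit strong consistency of the scheme: the exact solution, being sufficiently regular, makes all jump and face remainder terms in the forms \eqref{eq:formsSeparated} vanish, hence it also satisfies the semidiscrete equation \eqref{eq:DG}. Subtracting \eqref{eq:DG} for $(\vec{d}_h,\dots)$ produces a Galerkin-orthogonality identity for the total error against every discrete test function; inserting the interpolation splitting and moving the interpolation contributions to the right-hand side yields an error equation in which $(\vec{e}_h^\vec{d},\{e_h^{P_\jj}\}_{\jj\in J},\vec{e}_h^\vec{u},e_h^{P_\FF})$ solves the same coupled problem as in \eqref{eq:DG}, now forced by the interpolation error through the bilinear forms. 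I would then mimic the energy argument of \cref{th:stab} by choosing $\vec{w}_h=\partial_t\vec{e}_h^\vec{d}$, $q_{\jj,h}=e_h^{P_\jj}$, $\vec{v}_h=\vec{e}_h^\vec{u}$, $q_h=e_h^{P_\FF}$. By the same antisymmetric structure that made the couplings cancel in the stability proof, the $\mathcal B_\jj$, $\mathcal B_\FF$ and $\mathcal J$ contributions of the pure discrete error cancel, and after integrating in time and integrating by parts in time the inertial and $\mathcal A_\PP$ terms, the left-hand side reconstructs $\normEN{(\vec{e}_h^\vec{d},\{e_h^{P_\jj}\}_{\jj\in J},\vec{e}_h^\vec{u},e_h^{P_\FF})}^2$, using the coercivity of $\mathcal A_\PP,\mathcal A_\jj,\mathcal C_\jj$ and the inf-sup-based coercivity of $\mathcal L_\FF$ from \cref{th:contcoerc} exactly as in \cref{th:stab}.

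Next I would bound the right-hand side, every term of which carries an interpolation-error factor: bulk terms by Cauchy--Schwarz, the DG face terms by the continuity of the forms in the $\normcont{\cdot}$ norms, and the interface remainders $\mathcal J(e_I^{P_\EE},\cdot,\cdot)$ and $\mathcal J(\cdot,\partial_t\vec{e}_I^\vec{d},\vec{e}_I^\vec{u})$ by \cref{th:newcontJ}. Young's inequality then absorbs the genuinely $\normEN{\cdot}$-controlled factors into the left-hand side, and Gronwall's lemma disposes of the $L^2$-in-time terms $\|\sqrt{\rho_\PP}\partial_t\vec{e}_h^\vec{d}\|_{\Omega_\PP}$, $\|\sqrt{c_\jj}e_h^{P_\jj}\|_{\Omega_\PP}$, $\|\sqrt{\rho_\FF}\vec{e}_h^\vec{u}\|_{\Omega_\FF}$ exactly as at the end of \cref{th:stab}. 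A final application of \cref{th:interp} to $\vec{d},p_\kk,\vec{u},p$ and to their time derivatives, bounding both the discrete-part right-hand side and the interpolation part of the triangle inequality, delivers \eqref{eq:error} with the optimal $h_K^{2m}$ rate.

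The main obstacle is the treatment of the coupling terms carrying a time derivative of the displacement, namely the remainders $\mathcal B_\kk(e_I^{P_\kk},\partial_t\vec{e}_h^\vec{d})$ from the poroelastic coupling and $\mathcal J(e_I^{P_\EE},\partial_t\vec{e}_h^\vec{d},\vec{e}_h^\vec{u})$ from the interface. Since the energy norm controls only $\|\sqrt{\rho_\PP}\partial_t\vec{e}_h^\vec{d}\|_{\Omega_\PP}$ and not $\partial_t\vec{e}_h^\vec{d}$ in the $\normDGd{\cdot}$ norm, these cannot be estimated directly; they must be integrated by parts in time, which shifts the derivative onto the interpolation error (this is precisely why $\partial_t\vec{d}$, $\partial_{tt}^2\vec{d}$ and $\partial_t p_\kk$ appear in \eqref{eq:error}) and generates boundary-in-time contributions such as $\mathcal B_\kk(e_I^{P_\kk}(t),\vec{e}_h^\vec{d}(t))$ and its interface analogue. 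Ensuring, via the penalty scaling \eqref{eq:penaltyparams} and the weighted trace bounds of \cref{th:newcontJ}, that these boundary terms are absorbed into $\normDGd{\vec{e}_h^\vec{d}(t)}$ without losing powers of $h$, while keeping the Gronwall constant independent of the mesh size, is the delicate point of the proof.
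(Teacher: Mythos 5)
Your proposal follows essentially the same route as the paper's proof in \cref{sec:proofConv}: the interpolant-based splitting of \cref{th:interp}, Galerkin orthogonality tested against $(\partial_t\vec{e}^\vec{d}_h,\{e^{P_\jj}_h\}_{\jj\in J},\vec{e}^\vec{u}_h,e^{P_\FF}_h)$, the energy argument of \cref{th:stab} with the coercivity/inf-sup results of \cref{th:contcoerc}, the continuity bounds of \cref{th:newcontJ,th:newcontOthers}, the penalty scaling \eqref{eq:penaltyparams} for the interface terms, and a final triangle inequality. You also correctly single out the delicate point---the terms $\mathcal B_\kk(e^{P_\kk}_I,\partial_t\vec{e}^\vec{d}_h)$ and the interface analogue, which the energy norm cannot absorb directly---and resolve it by integration by parts in time with boundary-in-time remainders, exactly as the paper does.
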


\section{Fully discrete problem}\label{sec:fullydiscrete}
We introduce a uniform partition $\{t_n\}_{n=0}^N$ of the interval $(0,T]$, with constant timestep $\Delta t=t_{n+1}-t_n$, for all $n=0,\ldots N-1$.
Starting from the algebraic form $\eqref{eq:formalalg}$ of problem $\eqref{eq:DG}$, we discretize the elastic momentum equation (first row) with Newmark's $\beta$-method, whereas we employ the $\theta$-method to discretize all the compartment pressure equations and the fluid problem.
For Newmark's discretization, we introduce two auxiliary vector variables $\vec{Z}^n, \vec{A}^n$ representing the expansion coefficients of the approximate elastic velocity and acceleration at time $t_n$ \cite{corti2022numerical}.
The resulting algebraic problem has the form
\begin{equation}\label{eq:fullydiscr}
A_1 \vec{X}^{n+1} = A_2\vec{X}^n + \vec{F}^{n+1}, \qquad n=1,\dots,N,
\end{equation}
where
\begin{align}
\vec{X}^n &= \begin{bmatrix}
\vec{D}^n \\ \vec{Z}^n \\ \vec{A}^n \\ \vec{P}_\text{A}^n \\ \vdots \\ \vec{P}_\EE^n \\ \vec{U}^n \\ \vec{P}^n
\end{bmatrix},
\qquad\qquad\qquad
\vec{F}^n = \begin{bmatrix}
\vec{F}_\PP^n \\
\vec{0} \\
\vec{0} \\
\theta\vec{F}_\text{A}^{n+1}+(1-\theta)\vec{F}_\text{A}^n \\ \vdots \\ \theta\vec{F}_\EE^{n+1}+(1-\theta)\vec{F}_\EE^n \\ \theta\vec{F}_\FF^{n+1}+(1-\theta)\vec{F}_\FF^n \\ \vec{0}
\end{bmatrix},
\end{align}
and the expression of the matrices $A_1,A_2$ are reported in \cref{sec:matfullydiscrete}.

\section{Verification tests -- convergence}\label{sec:results}

In this section, we verify the theoretical error bounds of \cref{th:conv}.
In all tests, we consider only one pressure compartment, that is $\jj\in J=\{E\}$.
The constant coefficients of the penalty parameters defined in \eqref{eq:penaltyparams} are set as $\overline{\eta}=\overline{\zeta}_\EE=\overline{\gamma}_\vec{v}=\overline{\gamma}_p=10$.
Regarding time discretization, all the results were obtained with $\beta=0.25$ and $\gamma=0.5$ for the Newmark scheme and $\theta=0.5$ for the $\theta$-method, both for Stokes' problem and the pressure compartment of the poro-elastic system.

The tests have been implemented in the 2D version of \lymph{} \cite{lymph}, an open-source MATLAB library for the solution of multiphysics problems with the PolyDG method, developed at MOX, Department of Mathematics, Politecnico di Milano.

\subsection{Test case 1: steady solution}\label{sec:convStaz}

In this test, we consider a manufactured steady solution of problem \eqref{eq:NSMPE}-\eqref{eq:bd}-\eqref{eq:interf}, so that we can verify the convergence of the semidiscrete formulation without spoiling the results with time discretization error.
In particular, we introduce the following exact solution on the 2D domain $\Omega=\Omega_\PP\cup\Omega_\FF=(-1,0)\times(0,1)\cup(0,1)\times(0,1)$, with interface $\Sigma=\{0\}\times(0,1)$ depicted in \cref{fig:domain}:
\begin{equation}\label{eq:exact2Dsteady1cpt}\begin{aligned}
    \vec{d}^\text{steady} &= \pi\mu\frac{K_\EE}{\mu_\EE^2}(1-\alpha_\EE)(\cc-\ss)\begin{bmatrix} -1 \\ 1 \end{bmatrix}, \\
    p_\EE^\text{steady} &= -\pi x \cos(\pi y) - 2\pi^2\mu\frac{K_\EE}{\mu_\EE}\sin(\pi y),\\
    \vec{u}^\text{steady} &= \pi\frac{K_\EE}{\mu_\EE}(\cc-\ss)\begin{bmatrix} 1 \\ -1\end{bmatrix}, \\
    p^\text{steady} &= -x\cos(\pi y) - 4\pi^2\mu\frac{K_\EE}{\mu_\EE}\sin(\pi y).
\end{aligned}\end{equation}
The source terms of $\eqref{eq:NSMPE}$ corresponding to the exact solution above are
\[\begin{aligned}
    \vec{f}_\PP &= \begin{bmatrix}
        -2\pi^3\mu\frac{K_\EE}{\mu_\EE}(1-\alpha_\EE)(\cc-\ss) -\pi\alpha_\EE\cos(\pi y) \\
        2\pi^3\mu\frac{K_\EE}{\mu_\EE}[(1-\alpha_\EE)(\cc-\ss) - \alpha_\EE\cos(\pi y)] + \pi^2\alpha_\EE x\sin(\pi y)
    \end{bmatrix},\\
    g_\EE &= \left(\pi^2\frac{K_\EE}{\mu_\EE} + \beta_\EE^\text{e}\right)
        p_\EE,\\
    \vec{f}_\FF &= \begin{bmatrix}
        2\pi^3\mu\frac{K_\EE}{\mu_\EE}(\cc-\ss) - \cos(\pi y) \\
        -2\pi^3\mu\frac{K_\EE}{\mu_\EE}(\cc-\ss + 2\cos(\pi y)) + pi x\sin(\pi y)
    \end{bmatrix},
\end{aligned}\]
and the Neumann stress on $\Gamma_\text{out}$ is:
\[
-\overline{p}^\text{out}\vec{n}_\FF=\left(\cos(\pi y) + 6\pi^2\mu\frac{K_\EE}{\mu_\EE}\sin(\pi y)\right)\vec{n}_\FF.
\]

With these data, we performed spatial convergence tests setting $\alpha_\EE=0.5$ and all the physical parameters $\rho_\PP,\mu_\PP,\lambda_\PP,c_\EE,\mu_\EE,K_\EE,\beta_\EE^\text{e},\rho_\FF,\mu_\FF$ equal to 1.
In \cref{fig:conv2D} we report the computed error.
From the results, we can clearly observe that the error decays w.r.t.~$h$ with a rate of order $m$, as predicted by our theoretical estimate \eqref{eq:error}.
Moreover, we can observe spectral convergence of the error w.r.t.~the polynomial approximation degree $m$.

\begin{figure}
    \centering
    \includegraphics[width=0.475\textwidth]{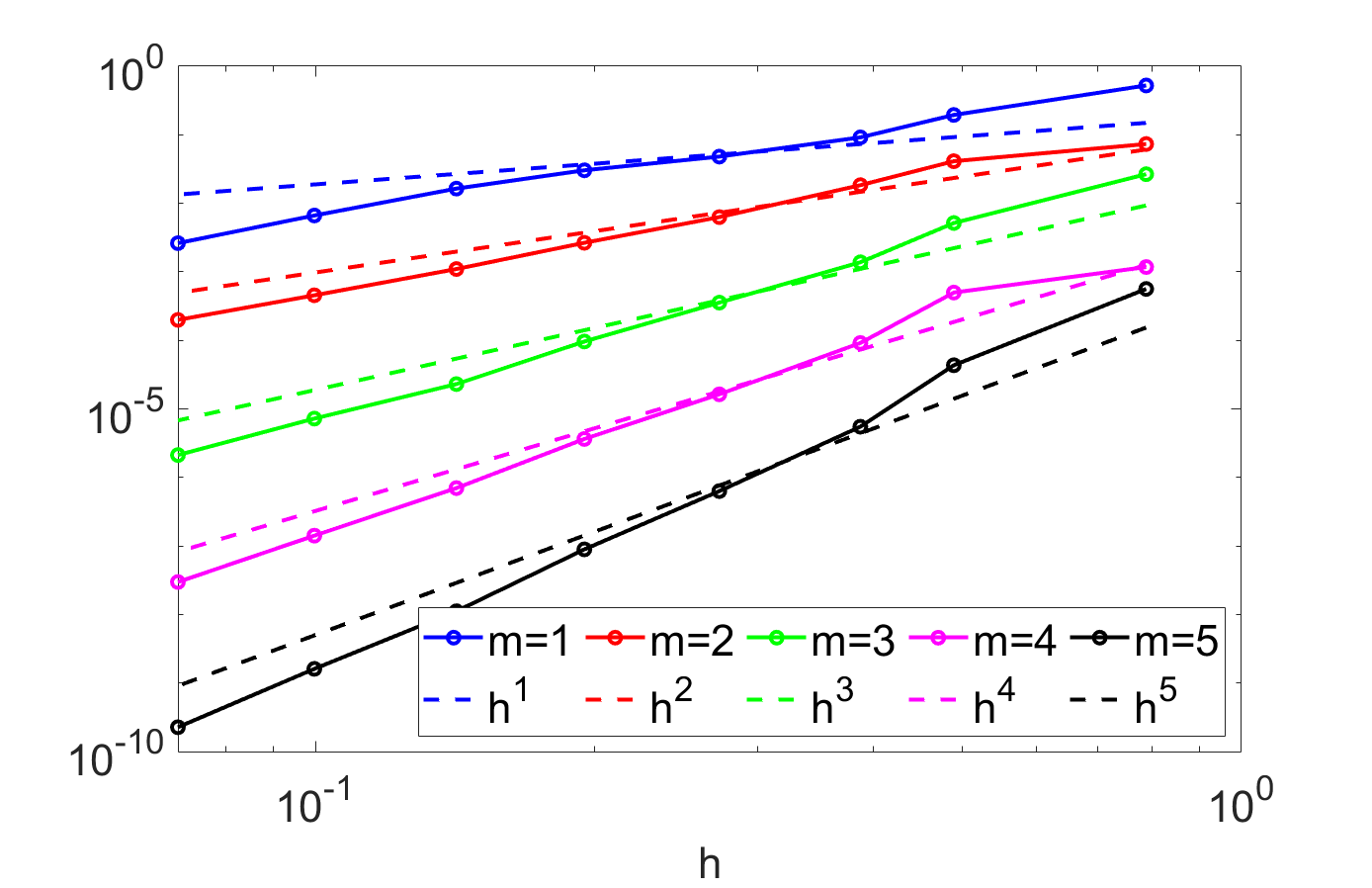}
    \includegraphics[width=0.475\textwidth]{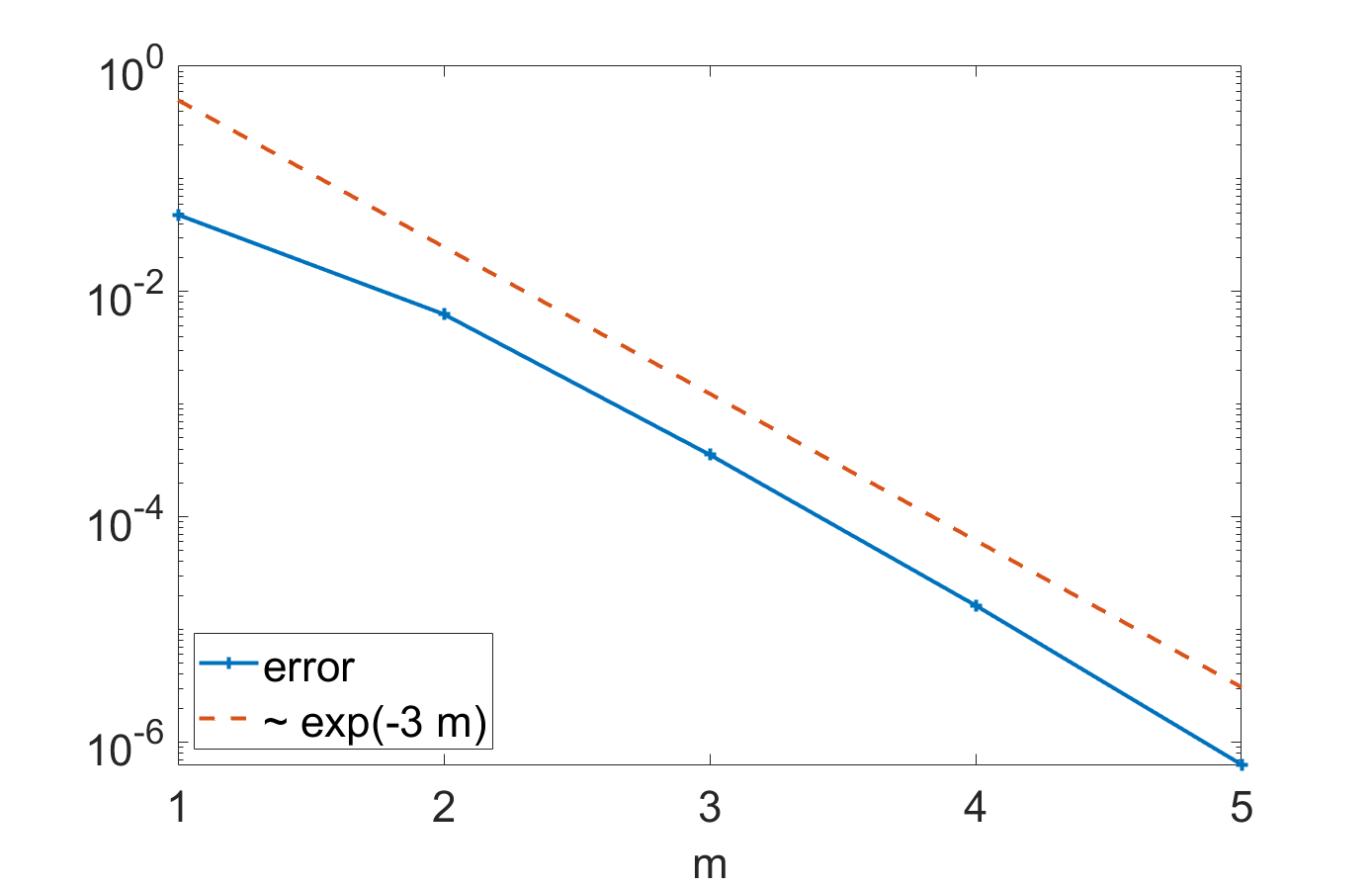}
    \caption{Test case 1. Left: computed errors in the energy norm \eqref{eq:normcoerc} versus $h$, for different choices of the polynomial degree $m=1,2,3,4,5$. Right: computed errors in the energy norm \eqref{eq:normcoerc} versus $m$, with $h=0.273$ (corresponding to $N=80$ polygons).}
    \label{fig:conv2D}
\end{figure}

\subsection{Test case 2: unsteady solution}\label{sec:convTimedep}
We consider the rectangular domain $\Omega=\Omega_\PP\cup\Omega_\FF=(-1,0)\times(0,1)\cup(0,1)\times(0,1)$ of \cref{sec:convStaz} and we introduce the following functions of time:
\begin{align*}
g_\PP(t) &= \cos(\eta t)-\sin(\eta t), & \text{where }\eta = \frac{\mu_\PP}{\mu_\FF(1-\alpha_\EE)},\\
g_\vec{u}(t) &= g_\PP(t) - \frac{\dot{g}_\PP(t)}{\eta}, & g_p(t) = \frac{g_\EE(t)+g_\vec{u}(t)}{2}.
\end{align*}
A time-dependent exact solution of problem \eqref{eq:NSMPE}-\eqref{eq:bd}-\eqref{eq:interf} can be manufactured combining these functions with the steady solution introduced in \cref{sec:convStaz}:
\begin{equation}\label{eq:exact2Dunsteady}\begin{aligned}
    \vec{d}(x,y,t) &= g_\PP(t)\vec{d}^\text{steady}(x,y),
    &
    p_\EE(x,y,t) &= g_\PP(t)p_\EE^\text{steady}(x,y), \\
    \vec{u}(x,y,t) &= g_\vec{u}(t)\vec{u}^\text{steady}(x,y),
    &
    p(x,y,t) &= g_p(t)p^\text{steady}(x,y),
\end{aligned}\end{equation}
with suitable definitions of the source terms and boundary data.
Again, we performed convergence tests setting $\alpha_\EE=0.5$ andall other physical parameters to 1.
We choose a time step $\Delta t=\SI{1e-3}{\second}$ and a final time  $T=5\,\Delta t$.
We report the computed errors in \cref{fig:conv2Dunsteady} (log-log scale).
As predicted by \cref{th:conv}, we observe that the error in the energy norm decays at a rate proportional to $h^m$, for any $m\geq1$.
Moreover, even though not covered by our theoretical analysis, we observe exponential convergence of the error for fixed $h$ and increasing $m$.
Finally, we observe that the value chosen for $\Delta t$ is small enough to prevent error saturation for the sequence of meshes considered in this test.

\begin{figure}
    \centering
    \includegraphics[width=0.475\textwidth]{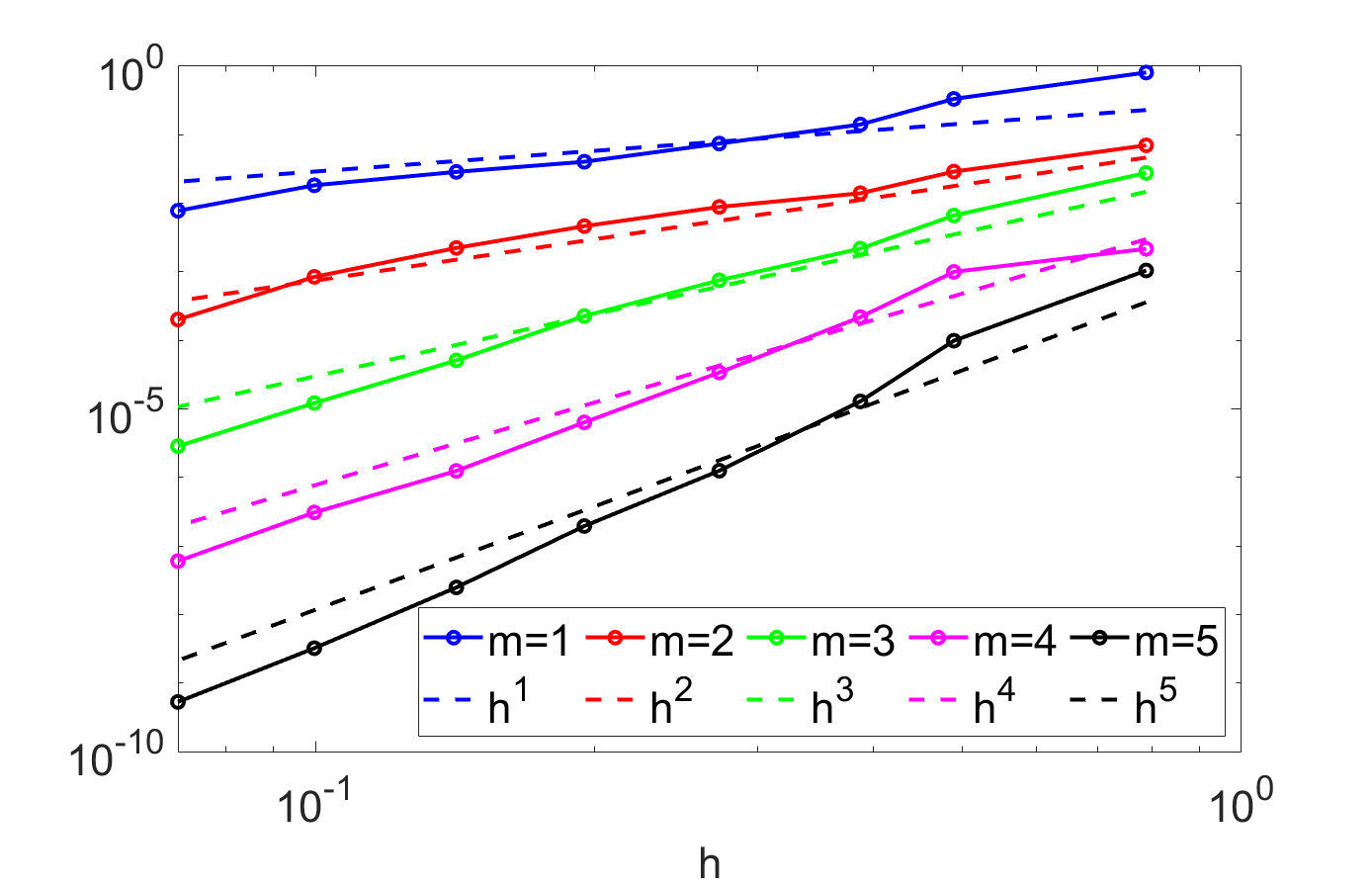}
    \includegraphics[width=0.475\textwidth]{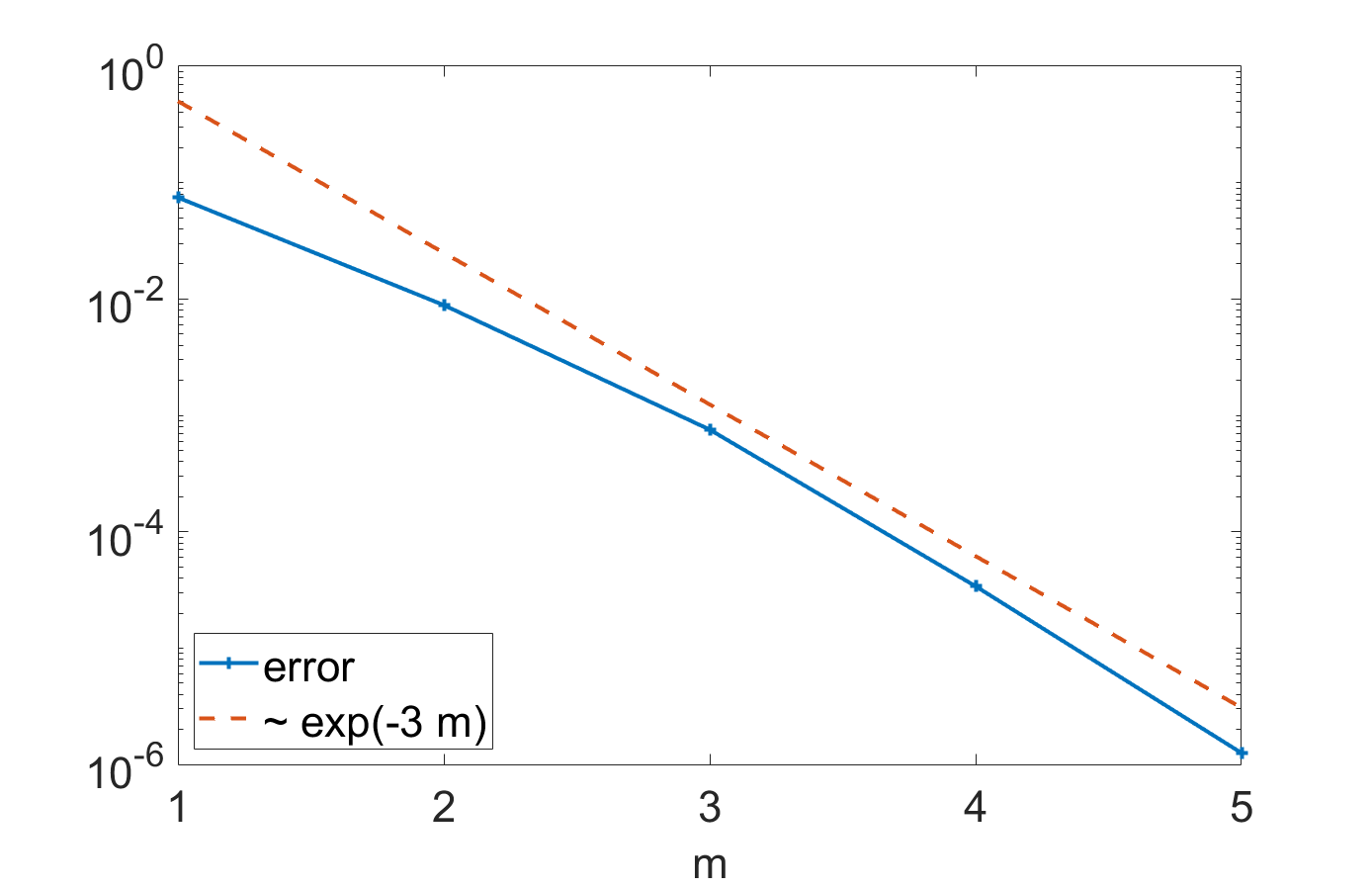}
    \caption{Test case 2. Left: computed errors in the energy norm \eqref{eq:normcoerc} versus $h$, with different polynomial degrees $m$ (same for all variables). Right: computed errors in the energy norm \eqref{eq:normcoerc} versus $m$, with $h=0.273$ corresponding to $N=80$ polygons.}
    \label{fig:conv2Dunsteady}
\end{figure}

\section{Numerical results on a 2D slice of the brain}\label{sec:brain}
\begin{figure}
    \centering
    $\vcenter{\hbox{\includegraphics[width=0.4\textwidth]{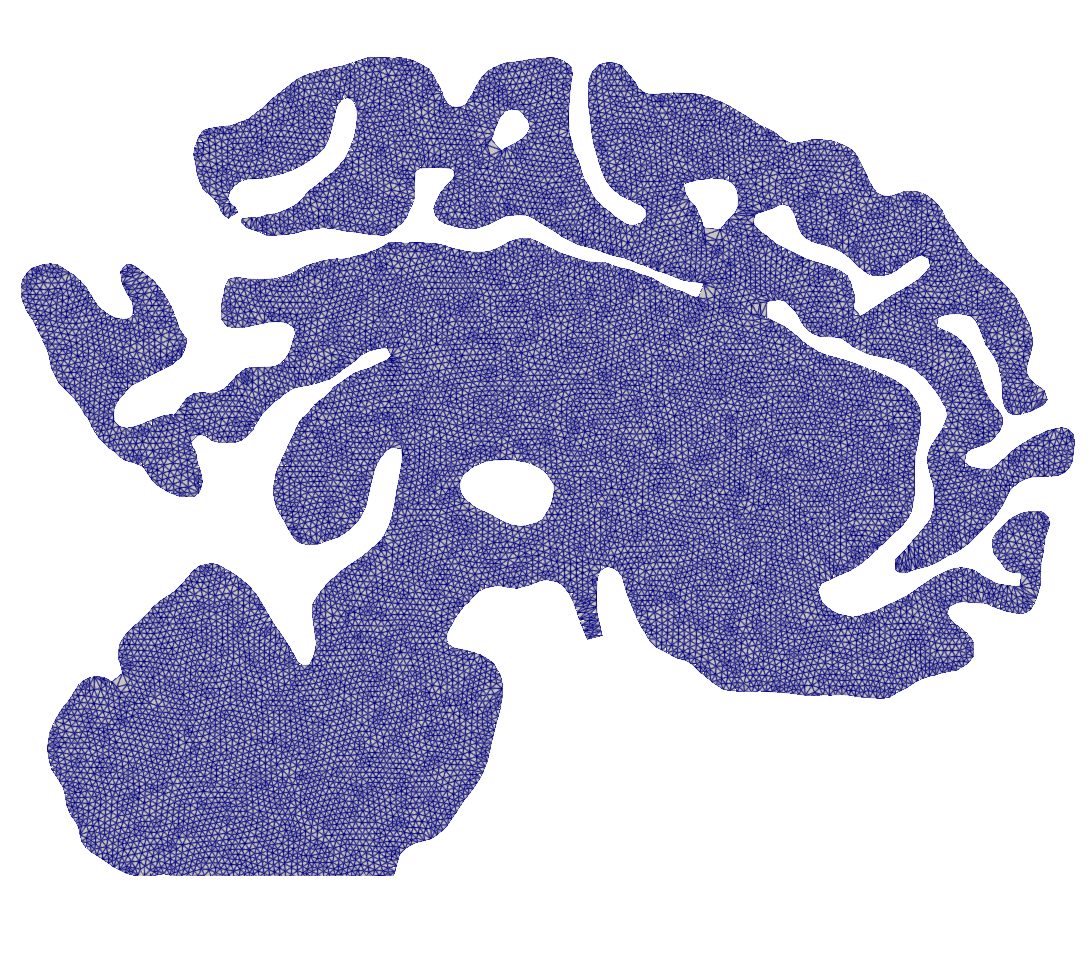}}}$
    \qquad\qquad
    $\vcenter{\hbox{\setlength{\fboxsep}{0pt}\fbox{\includegraphics[width=0.27\textwidth]{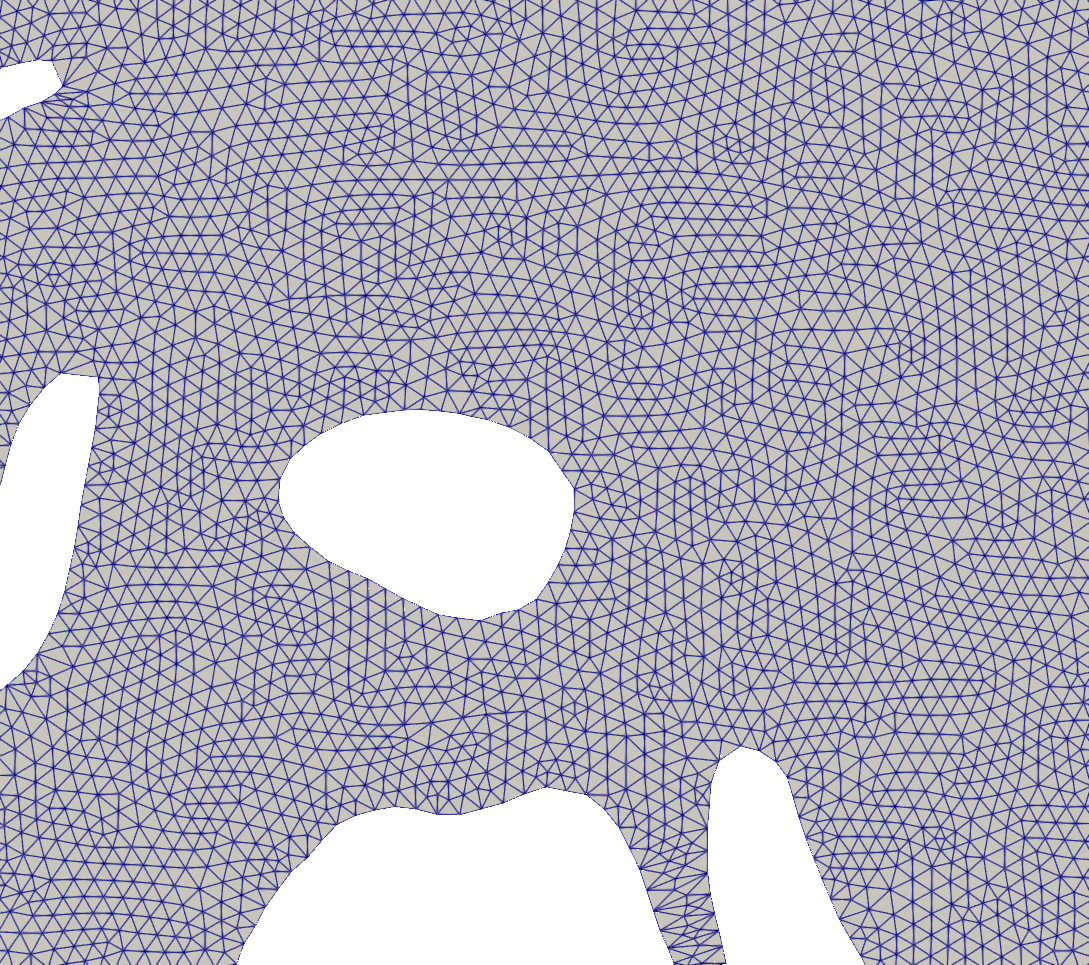}}}}$
    \\
    \includegraphics[width=0.435\textwidth]{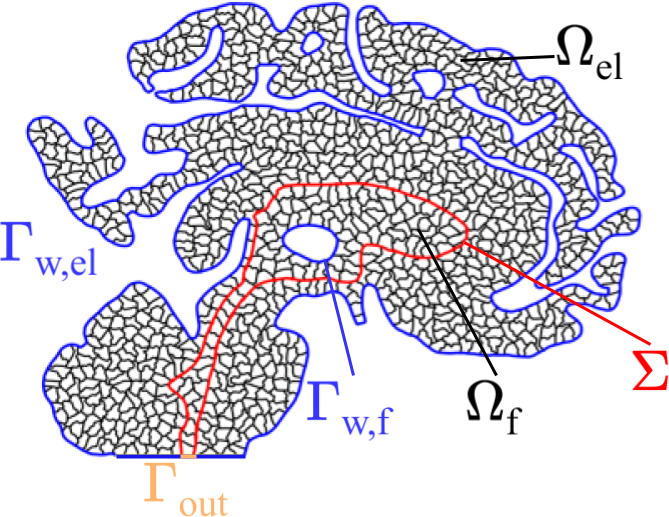}
    \qquad\qquad
    \includegraphics[width=0.33\textwidth]{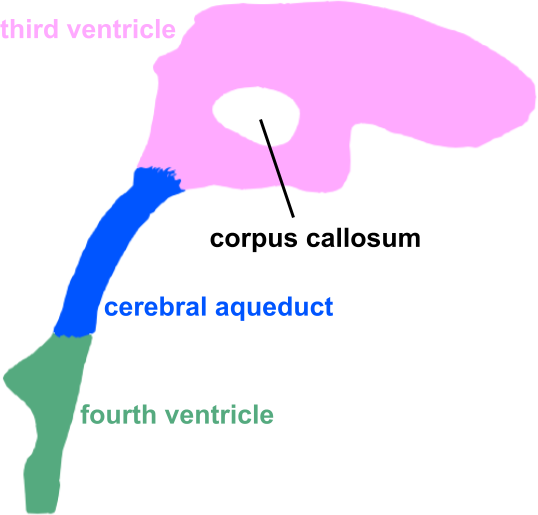}
    \caption{
    Triangular mesh of the 2D slice reconstructed from MRI (top left; zoom on top right to see the triangular elements),  polygonal computational mesh obtained by agglomeration (bottom left), and topographic anatomy of the fluid domain $\Omega_\FF$ (bottom right).}
    \label{fig:brainMesh}
\end{figure}
In this section, we demonstrate the capability of the proposed method for the solution of a realistic problem on a 2D slice of the brain and its ventricles, shown in \cref{fig:brainMesh}.
The geometry of the problem is based on structural Magnetic Resonance Images (MRI)
available in the OASIS-3 database (\url{https://oasis-brains.org}) \cite{lamontagne2019oasis}.
By means of Freesurfer (\url{https://surfer.nmr.mgh.harvard.edu/}) \cite{fischl2012freesurfer}, a three-dimensional brain geometry was segmented, and then sliced along the sagittal plane by VMTK (\url{vmtk.org}) \cite{antiga2008image}.
The resulting triangular 2D meshes of the cerebral tissue and of the brain ventricles surrounded by it are composed of 25847 and 3286 elements, respectively (see \cref{fig:brainMesh}).
Yet, the flexibility of the PolyDG method allows to employ meshes with elements of generic shape: by agglomeration, we can thus considerably reduce the number of mesh elements while retaining the same geometrical detail of the original triangular grid.
We agglomerate the grid by means of ParMETIS (\url{https://github.com/KarypisLab/ParMETIS}) \cite{karypis1997parmetis}, obtaining the polygonal mesh shown in \cref{fig:brainMesh}, consisting of 910 elements in $\mathscr T_\PP$ and 101 in $\mathscr T_\FF$.
This agglomeration is performed separately for the two physical domains, to preserve geometrical accuracy at the interface $\Sigma$.

Aiming at reproducing conditions in the physiological regime, we set the data and parameters of problem \eqref{eq:NSMPE} as follows.
We split the Dirichlet boundary $\Gamma_\text{w}$ into $\Gamma_{\text{w},\PP}$ representing the dura mater membrane surrounding the brain tissue and the boundary $\Gamma_{\text{w},\FF}$ of the corpus callosum (the whole in the center of \cref{fig:brainMesh}).
In the poroelastic problem, we consider only the compartment related to the extracellular CSF, namely $J=\{\EE\}$, and we assume no flow ($\nabla p_\EE\cdot\vec{n}_\PP=0$) through the dura mater $\Gamma_{\text{w},\PP}$.
The source of extracellular CSF is given by
$
    g_\EE = \SI{2e-3}{}\,\pi\sin(2\pi t) \SI{}{\per\second},
$
representing the variations in the interstitial CSF due to blood pulsation \cite{lee2019mixed,corti2022numerical,causemann2022human}, with a period of $\SI{1}{\second}$ representing a heartbeat.
No additional external forces act on the poroelastic tissue or the ventricle flow, that is $\vec{f}_\PP=\vec{f}_\FF=\vec{0}$.
No slip conditions $\vec{u}=\vec{0}$ are enforced on $\Gamma_{\text{w},\FF}$, while a no-stress condition $\overline{p}^\text{out}=0$ is imposed on the obex $\Gamma_\text{out}$ of the fourth ventricle, where the CSF would flow into the central canal of the spinal cord.
Zero initial conditions are imposed on all variables and the values of the remaining physical parameters of the problem are reported in \cref{tab:modelparams}, with the slight modifications $\alpha_\EE=0.49, \beta_\EE^\text{e}=\SI{0}{\square\meter\per\newton\per\second}$: these values are in the physiological range of the brain function \cite{lee2019mixed,corti2022numerical,causemann2022human}.
For the PolyDG method we employ a polynomial degree $m=2$ for all variables, while for the time advancement based on Newmark's ($\beta=0.25, \gamma=0.5$) and Crank-Nicolson's ($\theta=0.5$) methods we use a time step $\Delta t=\SI{1e-2}{\second}$ from $t_0=\SI{0}{\second}$ to $T=\SI{1}{\second}$.

Selected snapshots of the computed solutions are reported in \cref{fig:brainDP,fig:brainUP}.
Due to the choice of a zero outlet pressure $\overline{p}^\text{out}$, pressures $p_\EE, p$ should be interpreted as \emph{pressure differences} (w.r.t.~the fourth ventricle outflow) rather than absolute pressure values.
We notice that the amplitude of the brain displacement $\vec{d}$ is always below $\SI{0.2}{\milli\meter}$, thus justifying the choice of a linear elasticity modeling of the tissue, at least in the current settings.
Due to such small displacements, the interstitial pressure $p_\EE$ at the interface and the ventricle CSF pressure $p$ are substantially equal through the whole simulation timespan, in accordance with the interface conditions \eqref{eq:interf}.
The ventricle pressure $p$ undergoes very small variations, which are better observable in \cref{fig:brainUP}.
We can see that the CSF circulates in the third ventricle around the corpus callosum, whereas its flow through the cerebral aqueduct is always directed outwards, thus fulfilling its clearance function.
This clearance occurs notwithstanding the fact that the source term $g_\EE$ is negative in the second half of the timespan, and consequently the interstitial pressure $p_\EE$ is lower than the outlet pressure $\overline{p}^\text{out}=0$: this effect is due to the inertial terms in the problem formulation, thus highlighting their importance in the model.

All the observations above are in general agreement with measurements and computational results in the literature, particularly in the magnitude of the displacement $\vec{d}$ and in the pressure difference between the third ventricle and the outflow \cite{csfGeneration,causemann2022human}.
Some discrepancies can be observed in the overall range of $p_\EE$ and the magnitude of $\vec{u}$, but they do not exceed one order of magnitude.
These discrepancies may be due to the fact that the computational domain of the simulation presented here is a 2D slice of the brain, and does not include the lateral ventricles:
simulations in three dimensions, capturing the complete geometry of the brain, are envisaged to address this issue.

\begin{figure}
    \centering
    \includegraphics[width=0.8\textwidth]{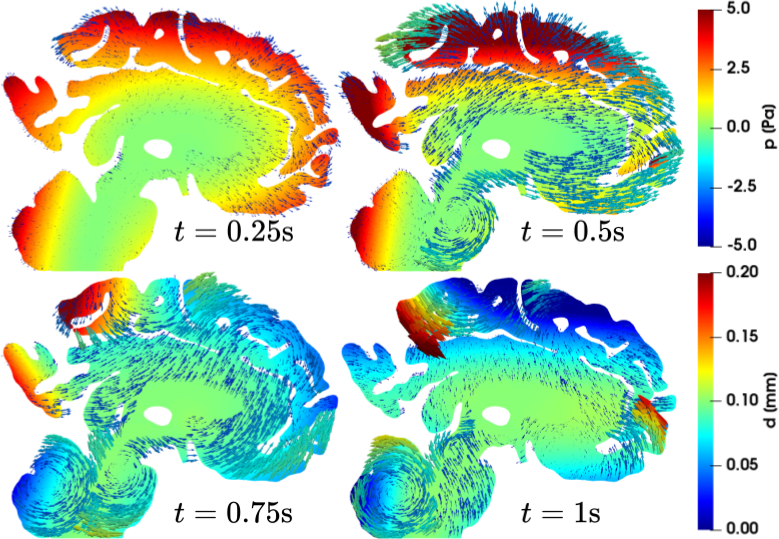}
    \caption{Brain simulation of \cref{sec:brain}. Discrete tissue displacement $\vec{d}_h$ and pressures $p_{\EE,h},p_h$ in the whole domain $\Omega_\PP\cup\Omega_\FF$ at different times during a heartbeat. Same scale for $p_{\EE,h},p_h$.}
    \label{fig:brainDP}
\end{figure}

\begin{figure}
    \centering
    \includegraphics[width=0.7\textwidth]{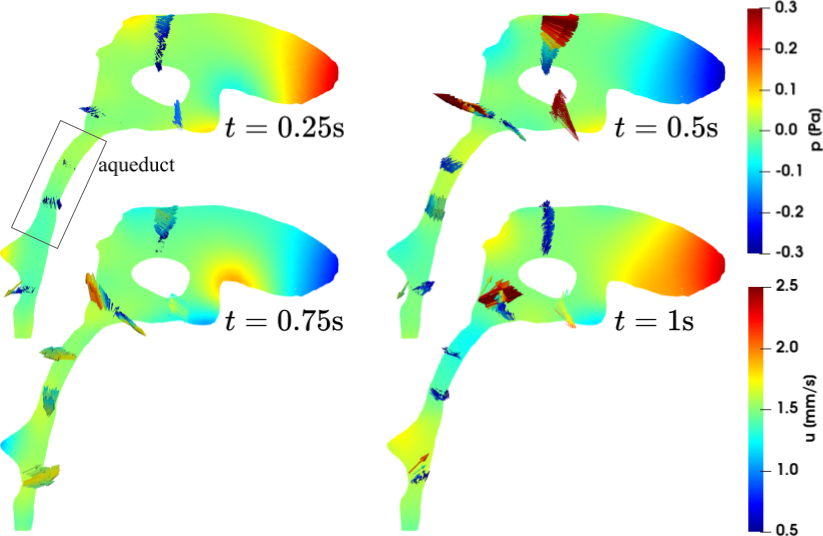}
    \caption{Brain simulation of \cref{sec:brain}. Velocity $\vec{u}_h$ and pressure $p_h$ in the brain ventricles $\Omega_\FF$ at different times during a heartbeat.
    The cerebral aqueduct is indicated by the box.}
    \label{fig:brainUP}
\end{figure}

\section{Conclusions}
We introduced a discontinuous Galerkin polytopal method for the discretization of a multiphysics fluid-mechanics model of the brain, encompassing a Multi-compartment Poro-Elastic (MPE) modeling of the tissue, perfused by blood and CSF, and Stokes' flow of the CSF in the brain cavities.
Our numerical method is particularly suitable for capturing the high geometrical complexity of the brain, thanks to the possibility of supporting high-order approximations and agglomerated grids.
We proved stability and optimal error bounds for the semidiscrete formulation under standard assumptions in the PolyDG framework.
Specific attention was paid to the modeling and numerical treatment of the interface conditions between the MPE and fluid domain.
We implemented the method in our PolyDG library \lymph{} \cite{lymph}.

We performed verification tests, validating the optimal order of convergence of our method with respect to the mesh element size and spectral convergence with respect to the polynomial degree.
Then, we showed the capability of our computational model to represent the physiological clearance function of the CSF in physiological settings, on a two-dimensional sagittal slice of the brain and of its ventricles.

Several directions for further development of the present work should be addressed.
First, in terms of modeling, nonlinear hyperelastic rheology could be considered to account for the extremely soft nature of the brain tissue \cite{mihai2017family,budday2017rheological,anssari2022modelling}.
This would require more detailed fluid-structure interaction conditions on the tissue-fluid interface, which have not been fully studied in the brain ventricles, but have been widely investigated in other biological systems such as the heart and blood vessels 
\cite{quarteroni2017cardiovascular,feng2019analysis,hirschhorn2020fluid,bucelli2023mathematical,fumagalli2023fluid}.
Second, to thoroughly analyze the effect of the complex brain geometry on waste clearance, the model could be applied to full three-dimensional geometries of different subjects.
To this aim, suitable geometry reconstruction techniques based on MRI data should be employed, such as those developed in \cite{ringstad2018brain,mardal2022mathematical,su2016cardiac,fumagalli2020image,fumagalli2022image,bennati2023image,baenen2023energetics}.
Third, to better represent the effect of blood pulsation on CSF flow (here modeled by a homogeneous CSF generation term), the blood compartments of the MPE model should be coupled to models of the main brain arteries \cite{jones2021anatomical,liu2020state}.
Finally, the coupling of the model proposed here with a suitable description of the generation, aggregation, and transport of misfolded proteins such as amyloid-$\beta$ and tau, would allow the investigation of their clearance in different physiological and pathological conditions \cite{corti2023structure,corti2023discontinuous}.
This would entail the development of numerical methods able to account for very different time scales, since the model presented here is defined on the scale of seconds (characteristic of blood pulsation) while prion aggregation and neurodegeneration occur over a timespan of decades.

\appendix

\section{Continuity and coercivity in the discrete spaces}\label{sec:contcoerc}
We collect the continuity and coercivity results of \cite{corti2022numerical,AMVZ22} in the following Lemma, which is instrumental to the proofs of stability (\cref{th:stab}) and convergence (\cref{th:conv}) of our numerical method \eqref{eq:DG}.
\begin{lemma}\label{th:contcoerc}
    Under \cref{hp:mesh}, the forms of \eqref{eq:formsSeparated} are continuous over the discrete spaces:
    \[
    \begin{aligned}
    |\mathcal A_\PP(\vec{d},\vec{w})| &\lesssim \normDGd{\vec{d}}\normDGd{\vec{w}} && \forall \vec{d},\vec{w}\in\spaceW,\\
    |\mathcal A_\jj(p_\jj,q_\jj)| & \lesssim
    \normDGj{p_\jj}\normDGj{q_\jj} && \forall p_\jj,q_\jj\in\spaceQj,\quad \jj\in J,\\
    \left|\sum_{\jj\in J} \mathcal C_\jj(\{p_\kk\}_{\kk\in J}, q_\jj)\right| & \lesssim \sum_{\jj,\kk\in J}\normDGk{p_\kk}\normDGj{q_\jj} && \forall p_\kk,\in\spaceQk, q_\jj\in\spaceQj,\quad \jj,\kk\in J,\\
    |\mathcal B_\jj(q_\jj,\vec{w})| & \lesssim \normDGj{q_\jj}\normDGd{\vec{w}} && \forall \vec{w}\in\spaceW, q_\jj\in\spaceQj,\quad \jj\in J,\\
    |\mathcal A_\FF(\vec{u},\vec{v})| &\lesssim \normDGu{\vec{u}}\normDGu{\vec{v}} && \forall \vec{u},\vec{v}\in\spaceV,\\
    |\mathcal B(q,\vec{v})| & \lesssim \normDGp{q}\normDGu{\vec{v}} && \forall \vec{v}\in\spaceV, q\in\spaceQ,
    \end{aligned}
    \]
    Moreover,
    provided that the penalty constants are chosen sufficiently large,
    the following coercivity and inf-sup inequalities hold (for any $\jj\in J$):
    \[
    \begin{aligned}
    &\mathcal A_\PP(\vec{w},\vec{w}) \gtrsim \normDGd{\vec{w}}^2 \qquad \forall \vec{w}\in\spaceW,\\
    &\mathcal A_\FF(\vec{v},\vec{v}) \gtrsim \normDGu{\vec{v}}^2 \qquad\forall \vec{v}\in\spaceV,\\
    &\sum_{\jj\in J} \mathcal C_\jj(\{q_\kk\}_{\kk\in J}, q_\jj) \gtrsim \sum_{\jj\in J}\|\sqrt{\beta_\jj^\text{e}}q_\jj\|_{\text{DG},P_\jj}^2 
    \quad\text{and}\quad
    \mathcal A_\jj(q_\jj,q_\jj)  \gtrsim
    \normDGj{q_\jj}^2 \quad \forall q_\jj\in\spaceQj,\ \jj\in J,
    \\
    &\sup_{\vec{v}\in \spaceV\setminus\{0\}} \frac{\mathcal B_\FF(q,\vec{v})}{\normDGu{\vec{v}}} + \sqrt{\mathcal S(q,q)} \geq \beta_{\FF,h}\|q\|_{\Omega_\FF} \qquad \forall q\in\spaceQ,
    \end{aligned}
    \]
    where $\beta_{\FF,h}$ is the discrete inf-sup constant \cite{AMVZ22}.
\end{lemma}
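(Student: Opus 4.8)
The plan is to establish each estimate by reducing it to the standard PolyDG ingredients: element-wise Cauchy--Schwarz, the discrete (inverse) trace inequality \eqref{eq:inversetrace}, and the explicit scaling of the penalty weights in \eqref{eq:penaltyparams}. Since every form in \eqref{eq:formsSeparated} splits additively into a volume part and symmetric-interior-penalty face contributions, I would treat the two separately. For the continuity bounds the volume terms are immediate from Cauchy--Schwarz against the broken norms \eqref{eq:brokennorms}. For each consistency face term I would pair the average of the flux with the jump, weighted by the penalty: for $\mathcal A_\PP$, for instance, I write $\int_F \average{\sigma_\PP(\vec{d})}\colon\jump{\vec{w}} \le \|\eta^{-1/2}\average{\sigma_\PP(\vec{d})}\|_F\,\|\eta^{1/2}\jump{\vec{w}}\|_F$ and then use \eqref{eq:inversetrace} together with $\eta\sim\overline{\mathbb C}_\PP^K/\{h\}_\text{H}$ to absorb $\|\eta^{-1/2}\average{\sigma_\PP(\vec{d})}\|_F$ into $\|\mathbb C_\PP^{1/2}\varepsilon_h(\vec{d})\|$. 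The same pattern handles $\mathcal A_\jj$, $\mathcal A_\FF$, $\mathcal B_\jj$, and $\mathcal B_\FF$, since in each case the inverse trace inequality converts the boundary norm of a discrete flux into its element $L^2$-norm, at which point the penalty scaling cancels the mesh factor and leaves a mesh-independent constant. The continuity of $\sum_\jj\mathcal C_\jj$ follows directly from the boundedness of the transfer coefficients $\beta_{\kk\jj},\beta_\jj^\text{e}$ and Cauchy--Schwarz, using that the $L^2(\Omega_\PP)$-norm is controlled by $\normDGj{\cdot}$ on the discrete space through a discrete Poincar\'e--Friedrichs inequality.

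For the coercivity statements I would test each symmetric form against its own argument and again split into volume, consistency, and penalty parts. For $\mathcal A_\PP(\vec{w},\vec{w})$ the volume and penalty contributions are exactly $\|\mathbb C_\PP^{1/2}\varepsilon_h(\vec{w})\|^2$ and $\|\sqrt\eta\jump{\vec{w}}\|^2$, while the two consistency terms combine into $-2\int_F\average{\sigma_\PP(\vec{w})}\colon\jump{\vec{w}}$; bounding this by Young's inequality and controlling $\|\eta^{-1/2}\average{\sigma_\PP(\vec{w})}\|$ via the same inverse-trace argument shows that the negative contribution equals $C_{\rm inv}\,\overline{\eta}^{-1}$ times the full DG norm, so choosing the penalty constant $\overline{\eta}$ large enough yields $\mathcal A_\PP(\vec{w},\vec{w})\gtrsim\normDGd{\vec{w}}^2$. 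The arguments for $\mathcal A_\FF$ and $\mathcal A_\jj$ are identical, with $\gamma_\vec{v},\zeta_\jj$ in place of $\eta$. The coercivity of the exchange form is purely algebraic: testing $\mathcal C_\jj$ with $q_\jj$ and summing over $\jj$, the inter-compartment part reorganises, using the symmetry $\beta_{\kk\jj}=\beta_{\jj\kk}$, into $\tfrac12\sum_{\jj,\kk}\int_{\Omega_\PP}\beta_{\kk\jj}(q_\jj-q_\kk)^2\ge0$, leaving the lower bound $\sum_\jj\|\sqrt{\beta_\jj^\text{e}}\,q_\jj\|_{\Omega_\PP}^2$.

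The genuinely delicate point, and the one I expect to be the main obstacle, is the inf-sup condition for $\mathcal B_\FF$ stabilised by $\mathcal S$, with a constant $\beta_{\FF,h}$ independent of $h$. Unlike the coercivity bounds, this cannot be obtained by local face-by-face estimates: it is the discrete pressure--velocity compatibility for the DG Stokes subproblem, and I would inherit it from the analysis of \cite{AMVZ22}. The plan there is to start from the continuous Stokes inf-sup on $\Omega_\FF$, lift a given $q\in\spaceQ$ to a velocity field whose divergence matches $q$, and then control the DG interpolation error through the jump-penalty term $\mathcal S$; equivalently, one constructs a Fortin-type operator that is uniformly bounded in $\normDGu{\cdot}$ and reproduces the divergence tested against discrete pressures up to a remainder absorbed by $\sqrt{\mathcal S(q,q)}$. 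The $h$-independence of $\beta_{\FF,h}$ hinges on the mesh-regularity hypotheses of \cref{hp:mesh} (polytopic regularity and the shape-regular simplicial covering), which are precisely what make the interpolation and trace estimates uniform. Once this inf-sup is available, it feeds into \cref{th:stab} through the coercivity-up-to-inf-sup bound $\mathcal L_\FF(\vec{u},p;\vec{u},p)\gtrsim\normDGu{\vec{u}}^2+\normDGp{p}^2$ quoted there, with the constant scaling like $\beta_{\FF,h}^{-2}$.
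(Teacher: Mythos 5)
Your proposal is correct, but note that the paper itself contains no proof of this lemma: it is stated purely as a collection of results imported from \cite{corti2022numerical} and \cite{AMVZ22}, so there is no internal argument to compare against. What you have written is a faithful reconstruction of the standard PolyDG arguments that those references actually use: element-wise Cauchy--Schwarz plus the inverse trace inequality \eqref{eq:inversetrace} and the penalty scalings \eqref{eq:penaltyparams} for the continuity bounds (the cancellation $\eta^{-1/2}\cdot h_K^{-1/2}\sim \overline{\mathbb C}_\PP^{-1/2}$ is exactly the mechanism, and it extends verbatim to $\mathcal B_\jj$ and $\mathcal B_\FF$ since the pressure DG norm carries the bulk $L^2$ term); Young's inequality with a sufficiently large penalty constant for the coercivity of $\mathcal A_\PP,\mathcal A_\jj,\mathcal A_\FF$; the symmetric rearrangement $\tfrac12\sum_{\jj,\kk}\int_{\Omega_\PP}\beta_{\kk\jj}(q_\jj-q_\kk)^2\geq 0$ for the exchange form; and, correctly identified as the only genuinely global ingredient, the stabilized inf-sup condition, which you, like the paper, ultimately inherit from the Fortin-type construction of \cite{AMVZ22} rather than reproving. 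Two minor caveats worth recording: the continuity bound for $\sum_\jj\mathcal C_\jj$ in the DG norms does require the discrete Poincar\'e--Friedrichs step you invoke, which presupposes a nonempty Dirichlet face set for each compartment (otherwise one must keep the $L^2$ norms, as the stability proof effectively does through the storage terms); and the rearrangement in the coercivity of $\mathcal C_\jj$ uses the symmetry $\beta_{\jj\kk}=\beta_{\kk\jj}$, which the paper assumes implicitly (cf.~the constant coefficients in \cref{tab:modelparams}) but never states. Neither affects the validity of your argument.
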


\section{Proof of \cref{th:newcontJ}}\label{sec:proofnewcontJ}
    Let $q\in H^2(\mathscr T_{h,\PP}), \vec{w}\in H^2(\mathscr T_{h,\PP}), \vec{v}\in H^2(\mathscr T_{h,\FF})$,
    and $q\in\spaceQE, \vec{w}_h\in \spaceW, \vec{v}_h\in\spaceV$.
    For any $F\in\mathscr F_h^\Sigma$, we denote by $K_\PP^F$ and $K_\FF^F$ the elements of $\mathscr T_{h,\PP}$ and $\mathscr T_{h,\FF}$, respectively, sharing $F$ in their boundary. Then,
    \[
    \begin{aligned}
    |\mathcal J(q,\vec{w}_h,\vec{v}_h)| & 
    \leq \sum_{F\in \mathscr F_h^\Sigma} \left(\left|\int_F q\,\vec{w}_h\cdot\vec{n}_\PP\right|+\left|\int_F q\,\vec{v}_h\cdot\vec{n}_\FF\right|\right)
    \\ &
    \lesssim \sum_{F\in\mathscr F_h^\Sigma} \left(\|\eta^{1/2}q\|_F\|\eta^{-1/2}\vec{w}_h\|_F + \|\gamma_p^{-1/2}q\|_F\|\gamma_p^{1/2}\vec{v}_h\|_F\right)
    \\ &
    \overset{\fbox{TR}}{\lesssim} \sum_{F\in\mathscr F_h^\Sigma} \left( \|\eta^{1/2}q\|_F\cancel{h_{K_\PP^F}^{1/2}}\cancel{{h_{K_\PP^F}^{-1/2}}}\|\vec{w}_h\|_{K_\PP^F} + \|\gamma_p^{-1/2}q\|_F\cancel{h_{K_\FF^F}^{1/2}}\cancel{{h_{K_\FF^F}^{-1/2}}}\|\vec{v}_h\|_{K_\FF^F} \right)
    \\ &
    \leq \|\eta^{1/2}q\|_{\mathscr F_h^\Sigma}\normDGd{\vec{w}_h} + \|\gamma_p^{-1/2}q\|_{\mathscr F_h^\Sigma}\normDGu{\vec{v}_h},
    \end{aligned}
    \]
where, in line \fbox{TR}, we have used the inverse trace inequality \eqref{eq:inversetrace} and the fact that
$\eta|_K\sim h_K^{-1} \ \forall K\in \mathscr T_{h,\PP}$ and $\gamma_p|_K\sim h_K \ \forall K\in\mathscr T_{h,\FF}$.
    An analogous argument (using $\gamma_\vec{v}^{1/2}$ in place of $\gamma_p^{-1/2}$) allows to control $|J(q_h,\vec{w},\vec{v})|$, thus concluding the proof.

\section{Proof of \cref{th:conv}}\label{sec:proofConv}

In this section, we prove the optimal convergence estimate stated in \cref{th:conv}.
This result is based on the inverse trace inequality \eqref{eq:inversetrace}, on \cref{th:newcontJ} and on the following continuity results (proved in \cite{antonietti2022high,antonietti2023discontinuous,AMVZ22}), which extend \cref{th:contcoerc} to consider also non-discrete functions:
\begin{lemma}\label{th:newcontOthers}
    Under the same assumptions of \cref{th:contcoerc}, the following inequalities hold:
    \[
    \begin{aligned}
    |\mathcal A_\PP(\vec{d},\vec{w}_h)| &\lesssim \normcontD{\vec{d}}\normDGd{\vec{w}_h} && \forall \vec{d}\in[H^2(\mathscr T_{h,\PP})]^d, \vec{w}_h\in\spaceW,\\
    |\mathcal A_\jj(p_\jj,q_{\jj,h})| & \lesssim
    \normcontJ{p_\jj}\normDGj{q_{\jj,h}} && \forall p_\jj \in H^2(\mathscr T_{h,\PP}),q_{\jj,h}\in\spaceQj,\quad \forall\jj\in J,\\
    |\mathcal B_\jj(q_{\jj,h},\vec{w})| & \lesssim \normDGj{q_{\jj,h}}\normcontD{\vec{w}} && \forall \vec{w}\in [H^2(\mathscr T_{h,\PP})]^d, q_{\jj,h}\in\spaceQj,\quad \forall\jj\in J,\\
    |\mathcal B_\jj(q_\jj,\vec{w}_h)| & \lesssim \normcontJ{q_\jj}\normDGd{\vec{w}_h} && \forall \vec{w}_h\in\spaceW, q_\jj\in H^2(\mathscr T_{h,\PP}),\quad \forall\jj\in J,\\
    |\mathcal A_\FF(\vec{u},\vec{v}_h)| &\lesssim \normcontU{\vec{u}}\normDGu{\vec{v}_h} && \forall \vec{u}\in [H^2(\mathscr T_{h,\FF})]^d,\vec{v}_h\in\spaceV,\\
    |\mathcal B_\FF(q_h,\vec{v})| & \lesssim \normDGp{q_h}\normcontU{\vec{v}} && \forall \vec{v}\in [H^2(\mathscr T_{h,\FF})]^d, q_h\in\spaceQ,\\
    |\mathcal B_\FF(q,\vec{v}_h)| & \lesssim \normcontP{q}\normDGu{\vec{v}_h} && \forall \vec{v}_h\in\spaceV, q\in H^1(\mathscr T_{h,\FF}).
    \end{aligned}
    \]
\end{lemma}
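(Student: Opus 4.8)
The plan is to prove each of the seven continuity bounds in \cref{th:newcontOthers} by the same template used for the purely discrete continuity estimates in \cref{th:contcoerc}, the key point being that the nondiscrete argument in each product has been measured in a \emph{stronger} norm ($\normcont{\cdot}$) precisely so that the average terms on interior and Dirichlet faces can be absorbed. First I would recall that each form in \eqref{eq:formsSeparated} splits into a volume (bulk) contribution and a sum of face contributions. On the bulk part I would simply apply Cauchy--Schwarz in $L^2(\Omega_\star)$ together with the definitions of the broken norms \eqref{eq:brokennorms}; for instance the term $\int_{\Omega_\PP}\sigma_\PP(\vec{d})\colon\varepsilon_h(\vec{w}_h)$ is bounded by $\|\mathbb C_\PP^{1/2}\varepsilon_h(\vec{d})\|_{L^2(\mathscr T_\PP)}\|\mathbb C_\PP^{1/2}\varepsilon_h(\vec{w}_h)\|_{L^2(\mathscr T_\PP)}$, which is $\le\normcontD{\vec{d}}\normDGd{\vec{w}_h}$ since the first factor is one of the two pieces of $\normcontD{\cdot}$.

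The face terms are where the stronger norm is used, so I would treat them next. On each face $F\in\mathscr F_\PP^\II\cup\mathscr F_\PP^\DD$ the form $\mathcal A_\PP$ carries three contributions. The consistency term $\average{\sigma_\PP(\vec{d})}\colon\jump{\vec{w}_h}$ is bounded, face-by-face via Cauchy--Schwarz and the insertion $\eta^{1/2}\eta^{-1/2}=1$, by $\|\eta^{-1/2}\average{\sigma_\PP(\vec{d})}\|_F\,\|\eta^{1/2}\jump{\vec{w}_h}\|_F$, and summing over faces gives exactly the stabilization part of $\normcontD{\vec{d}}$ times the jump part of $\normDGd{\vec{w}_h}$. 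The penalty term $\eta\jump{\vec{d}}\colon\jump{\vec{w}_h}$ factors as $\|\eta^{1/2}\jump{\vec{d}}\|_F\,\|\eta^{1/2}\jump{\vec{w}_h}\|_F$, again controlled by the jump seminorms in $\normcontD{\cdot}$ and $\normDGd{\cdot}$. The only genuinely asymmetric term is the adjoint-consistency piece $\jump{\vec{d}}\colon\average{\sigma_\PP(\vec{w}_h)}$: here the argument $\vec{w}_h$ is discrete, so I would apply the inverse trace inequality \eqref{eq:inversetrace} to $\average{\sigma_\PP(\vec{w}_h)}$, gaining a factor $h_K^{-1/2}$ that matches $\eta^{1/2}\sim h_K^{-1/2}$ from \eqref{eq:penaltyparams}, thereby bounding this term by $\|\eta^{1/2}\jump{\vec{d}}\|_F\,\|\mathbb C_\PP^{1/2}\varepsilon_h(\vec{w}_h)\|_{K}$ and hence by $\normcontD{\vec{d}}\normDGd{\vec{w}_h}$. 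The inequalities for $\mathcal A_\jj$, $\mathcal A_\FF$ follow identically, with $\zeta_\jj,\gamma_\vec{v}$ playing the role of $\eta$ and the broken norms \eqref{eq:brokennorms} supplying the matching factors. The four bounds on $\mathcal B_\jj,\mathcal B_\FF$ are simpler, as these forms contain no adjoint term: for each I split into a bulk part (Cauchy--Schwarz, using $\|p_\jj\|_{L^2(\Omega_\PP)}\lesssim\normDGj{p_\jj}$ and analogously $\|q\|_{L^2(\Omega_\FF)}=\normDGp{q}$ up to the jump piece) and a single face term, and the placement of the nondiscrete factor in $\normcont{\cdot}$ versus the discrete factor in the corresponding $\normcoerc{\cdot}_{\mathrm{DG}}$-norm decides which of the two gets the inverse-trace treatment.

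The main obstacle is purely bookkeeping rather than conceptual: one must verify that in each of the seven inequalities the \emph{discrete} argument is the one receiving the inverse trace inequality on the adjoint-type or pressure-trace term, since \eqref{eq:inversetrace} is only valid for piecewise polynomials. The choice of which factor carries the $\normcont{\cdot}$ norm versus the plain DG-norm is dictated exactly by this requirement, and matching the powers of $h_K$ from \eqref{eq:penaltyparams} against the inverse-trace factor $h_K^{-1/2}$ is what makes the estimates scale-robust. Once these pairings are fixed, summation over all faces of $\mathscr F^\II\cup\mathscr F^\DD$ using \cref{hp:mesh} (bounded number of faces per element and local bounded variation of $h$) and a final Cauchy--Schwarz in the face index complete each bound; since these are exactly the hypotheses under which \cref{th:contcoerc} was established in \cite{corti2022numerical,AMVZ22}, the present extension to nondiscrete arguments is a direct adaptation of those proofs.
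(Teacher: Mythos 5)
Your proposal is correct and follows essentially the same route as the paper, which does not prove \cref{th:newcontOthers} itself but delegates to the cited works \cite{antonietti2022high,antonietti2023discontinuous,AMVZ22}: there, exactly as in your argument, each form is split into bulk and face contributions, the face terms are handled by weighted Cauchy--Schwarz with the penalty weights of \eqref{eq:penaltyparams}, and the inverse trace inequality \eqref{eq:inversetrace} is applied only to the discrete factor in the adjoint-consistency and average-type terms. The pairing principle you identify---the stronger norm $\normcont{\cdot}$ carried by the nondiscrete argument precisely so that its face averages are controlled directly, while the discrete argument's averages receive the inverse-trace treatment---is the exact mechanism of those proofs, so your reconstruction matches the intended argument.
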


We are now ready to prove our optimal convergence result.
\begin{proof}(\cref{th:conv})
    Using the interpolators defined in \cref{th:interp}, we split the error $\vec{e}^\vec{d}=\vec{e}^\vec{d}_I-\vec{e}^\vec{d}_h$ into an interpolation error $\vec{e}^\vec{d}_I=\vec{d}-\vec{d}_I\in[H^{m+1}(\mathscr T_{h,\PP})]^d$ and an approximation error $\vec{e}^\vec{d}_h=\vec{d}_h-\vec{d}_I\in\spaceW$.
    An analogous notation is introduced for the errors $e^{P_\jj}\ \forall\jj\in J, \vec{e}^\vec{u}, e^{P_\FF}$.
    We fix a time $t\in(0,T]$ and we proceed similarly to \cite{corti2022numerical}, namely we subtract equation \eqref{eq:DG} from \eqref{eq:weak}, tested against $(\partial_t\vec{e}^\vec{d}_h,\{e^{P_{\jj}}_h\}_{\jj\in J},\vec{e}^\vec{u}_h,e^{P_\FF}_h)$, thus obtaining
    \[\begin{aligned}
    (\rho_\PP\partial_{tt}^2\vec{e}^\vec{d}&,\partial_t\vec{e}^\vec{d}_h)_{\Omega_\PP} + \mathcal A_\PP(\vec{e}^\vec{d},\partial_t\vec{e}^\vec{d}_h) + \sum_{\kk\in J}\mathcal B_\kk(e^{P_\kk},\partial_t\vec{e}^\vec{d}_h) \\
    & + \sum_{\jj\in J} \left[ (c_\jj\partial_t e^{P_\jj}, e^{P_\jj}_h)_{\Omega_\PP} + \mathcal A_\jj(e^{P_\jj},e^{P_\jj}_h) + \mathcal C_\jj(\{e^{P_\kk}\}_{\kk\in J},e^{P_\jj}_h) - \mathcal B_\jj(e^{P_\jj}_h,\partial_t \vec{e}^\vec{d}) \right] \\
    & + (\rho_\FF\partial_t\vec{e}^\vec{u},\vec{u}_h)_{\Omega_\FF} + \mathcal A_\FF(\vec{e}^\vec{u},\vec{e}^\vec{u}_h) + \mathcal B_\FF(e^{P_\FF},\vec{e}^\vec{u}_h)  - \mathcal B_\FF(e^{P_\FF}_h,\vec{e}^\vec{u}) + \mathcal S(e^{P_\FF},e^{P_\FF}_h)\\
    & + \mathcal J(e^{P_\EE}, \partial_t\vec{e}^\vec{d}_h,\vec{e}^\vec{u}_h) - \mathcal J (e^{P_{\EE}}_h, \partial_t\vec{e}^\vec{d},\vec{e}^\vec{u}) = 0.
    \end{aligned}\]
    According to the splitting introduced above, we separate the terms depending only on the discrete approximation errors from those involving the interpolation errors.
    Then,
    integrating in time from $0$ to $t$ and proceeding as in the proof of \cref{th:stab},
    the coercivity inequalities of \cref{th:contcoerc} yield
    \begin{equation}\label{eq:proofconv1}
    \begin{aligned}
    \|&\sqrt{\rho_\PP}\partial_t\vec{e}^\vec{d}_h\|_{\Omega_\PP}^2 + \normDGd{\vec{e}^\vec{d}_h}^2
    + \sum_{\kk\in J}\left[\|\sqrt{c_\kk}e^{P_\kk}_h\|_{\Omega_\PP}^2 + \int_0^t\left(\normDGk{e^{P_\kk}_h}^2 + \|\sqrt{\beta^e_\kk}e^{P_\kk}_h\|_{\Omega_\PP}^2\right)ds \right]
    \\ &\qquad
    +\|\sqrt{\rho_\FF}\vec{e}^\vec{u}_h\|_{\Omega_\FF}^2 + \int_0^t\alpha\left(\normDGu{\vec{e}^\vec{u}_h}^2+\normDGp{e^{P_\FF}_h}^2\right)ds
    \\ &\qquad
    + \int_0^t\left[\cancel{J(e^{P_\EE}_h, \partial_t\vec{e}^\vec{d}_h,\vec{e}^\vec{u}_h)} - \cancel{J(e^{P_\EE}_h, \partial_t\vec{e}^\vec{d}_h,\vec{e}^\vec{u}_h)}
    \right]ds\\ &
    \lesssim \int_0^t(\rho_\PP\partial_{tt}^2\vec{e}^\vec{d}_I,\partial_t\vec{e}^\vec{d}_h)_{\Omega_\PP}ds + \mathcal A_\PP(\vec{e}^\vec{d}_I,\vec{e}^\vec{d}_h) - \int_0^t\mathcal A_\PP(\partial_t\vec{e}^\vec{d}_I,\vec{e}^\vec{d}_h)ds
    \\ &\qquad
    + \sum_{\kk\in J}\int_0^t\left[(c_\kk \partial_t e^{P_\kk}_I,e^{P_\kk}_h)_{\Omega_\PP} + \mathcal A_\kk(e^{P_\kk}_I,e^{P_\kk}_h) + \mathcal C_\kk(\{e^{P_\jj}_I\}_{\jj\in J},e^{P_\kk}_h) \right]ds
    \\ &\qquad
    + \sum_{\kk\in J}\mathcal B_\kk(e^{P_\kk}_I,\vec{e}^\vec{d}_h) - \sum_{\kk\in J}\int_0^t\left[\mathcal B_\kk( e^{P_\kk}_I,\partial_t\vec{e}^\vec{d}_h) + \mathcal B_\kk(e^{P_\kk}_h,\partial_t\vec{e}^\vec{d}_I)\right]ds
    \\ &\qquad
    + \int_0^t\left[(\rho_\FF\partial_t \vec{e}^\vec{u}_I,\vec{e}^\vec{u}_h)_{\Omega_\FF} + \mathcal A_\FF(\vec{e}^\vec{u}_I,\vec{e}^\vec{u}_h) + \mathcal B_\FF(e^{P_\FF}_I,\vec{e}^\vec{u}_u) - \mathcal B_\FF(e^{P_\FF}_h,\vec{e}^\vec{u}_I) + \mathcal S(e^{P_\FF}_I,e^{P_\FF}_h)\right]ds
    \\ &\qquad
    + \int_0^t\left[J(e^{P_\EE}_I, \partial_t\vec{e}^\vec{d}_h,\vec{e}^\vec{u}_h) - J(e^{P_\EE}_h, \partial_t\vec{e}^\vec{d}_I,\vec{e}^\vec{u}_I)\right]ds,
    \end{aligned}
    \end{equation}
    where no contribution arises from the initial conditions, due to the hypotheses.
    Using \cref{th:newcontJ,th:newcontOthers} and the inequality 
    \[
    \sum_{\kk\in J}|C_\kk(\{e_I^{P_\jj}\}_{\jj\in J}. e^{P_\kk}_h)|\lesssim \sum_{\jj,\kk\in J}\|c_\jj^{-1/2}e_I^{P_\jj}\|_{\Omega_\PP}\|\sqrt{c_\kk}e_h^{P_\kk}\|_{\Omega_\PP} \lesssim \sum_{\jj,\kk\in J}\normcontJ{e_I^{P_\jj}}\|\sqrt{c_\kk}e_h^{P_\kk}\|_{\Omega_\PP}
    \]
    
    on the right-hand side of \eqref{eq:proofconv1} yields
    \begin{equation}\label{eq:proovconv2}\begin{aligned}
    &
    \normEN{(\vec{e}^\vec{d}_h,\{e^{P_\jj}_h\}_{\jj\in J},\vec{e}^\vec{u}_h,e^{P_\FF}_h)}^2
    \\ &\qquad
    \lesssim \normcontD{\vec{e}^\vec{d}_I(t)}\normDGd{\vec{e}^\vec{d}_h(t)}
    + \int_0^t\left[
    \|\sqrt{\rho_\PP}\partial_{tt}^2\vec{e}^\vec{d}_I(s)\|_{\Omega_\PP}\|\sqrt{\rho_\PP}\partial_t\vec{e}^\vec{d}_h(s)\|_{\Omega_\PP} 
    \right.\\&\qquad\qquad\qquad\qquad\qquad\qquad\qquad\qquad\qquad\left.+ \normcontD{\partial_t\vec{e}^\vec{d}_I(s)}\normDGd{\vec{e}^\vec{d}_h(s)}
    \right]ds
    \\ &\qquad\qquad
    + \sum_{\kk\in J}\left[
    \normcontK{e^{P_\kk}_I(t)}\normDGd{\vec{e}^\vec{d}_h(t)}
    + \int_0^t
    \|\sqrt{c_\kk}\partial_t e^{P_\kk}_I(s)\|_{\Omega_\PP}\|\sqrt{c_\kk}e^{P_\kk}_h(s)\|_{\Omega_\PP}
    ds\right]
    \\ &\qquad\qquad
    + \sum_{\kk\in J} \int_0^t\left[
    \normcontK{e^{P_\kk}_I(s)}\normDGk{e^{P_\kk}_h(s)}
    + \normcontK{e^{P_\kk}_I(s)}\normDGd{\partial_t \vec{e}^\vec{d}_h(s)}
    \right]ds
    \\ &\qquad\qquad
    + \sum_{\kk\in J} \int_0^t\left[
    \normDGk{e^{P_\kk}_h(s)}\normcontD{\partial_t\vec{e}^\vec{d}_I(s)}
    + \sum_{\jj\in J}\normcontJ{e^{P_\jj}_I(s)}\|\sqrt{c_\kk}e^{P_\kk}_h(s)\|_{\Omega_\PP}
    \right]ds
    \\ &\qquad\qquad
    + \int_0^t[
    \|\sqrt{\rho_\FF}\partial_t \vec{e}^\vec{u}_I(s)\|_{\Omega_\FF}\|\vec{e}^\vec{u}_h(s)\|_{\Omega_\FF} + \normcontU{\vec{e}^\vec{u}_I(s)}\normDGu{\vec{e}^\vec{u}_h(s)}
    \\&\qquad\qquad\qquad\qquad\qquad\qquad\qquad\qquad\qquad
    + \normcontP{e^{P_\FF}_I(s)}\normDGu{\vec{e}^\vec{u}_h(s)}
    ]ds
    \\ &\qquad\qquad
    + \int_0^t\left[
    \normDGp{e^{P_\FF}_h(s)}\normcontU{\vec{e}^\vec{u}_I(s)} + \sqrt{\mathcal S(e^{P_\FF}_I(s),e^{P_\FF}_I(s))}\sqrt{\mathcal S(e^{P_\FF}_h(s),e^{P_\FF}_h(s))}
    \right]ds
    \\ &\qquad\qquad
    + \int_0^t\left[
    \|\eta^{1/2}e^{P_\EE}_I\|_{\mathscr F_h^\Sigma}\normDGd{\partial_t\vec{e}^\vec{d}_h} + \|\gamma_p^{-1/2}e^{P_\EE}_I\|_{\mathscr F_h^\Sigma}\normDGu{\vec{e}^\vec{u}_h}
    \right]ds
    \\ &\qquad\qquad
    + \int_0^t\left[
    \normDGE{e^{P_\EE}_h}\left(\|\eta^{1/2}\partial_t\vec{e}^\vec{d}_I\|_{\mathscr F_h^\Sigma} + \|\gamma_\vec{v}^{1/2}\vec{e}^\vec{u}_I\|_{\mathscr F_h^\Sigma}\right)
    \right]ds.
    \end{aligned}\end{equation}
Regarding the interface terms, we can observe that the choice of $\eta,\gamma_\vec{v},\gamma_p$ made in \eqref{eq:penaltyparams} implies that $\eta,\gamma_\vec{v},\gamma_p^{-1}$ scale as $h_K$ in each element $K\in\mathscr T_h$.
Thus, thanks to \cref{th:interp}, we obtain
\begin{align*}
\|\eta^{1/2}e_I^{P_\EE}\|_{\mathscr F_h^\Sigma}
&
\lesssim \sum_{K\in \mathscr T_{h,\PP}}\|\eta\|_{L^\infty(K)}^{1/2}\|h_K^{m+1/2}\|\mathcal E_K p_\EE\|_{H^{m+1}(\widehat{K})}
\lesssim \sum_{K\in \mathscr T_{h,\PP}}h_K^m\|\mathcal E_K p_\EE\|_{H^{m+1}(\widehat{K})},
\end{align*}
and similar optimal estimates for the other interpolation errors at the interface appearing in the last two lines of \eqref{eq:proovconv2}.

By Cauchy-Schwarz's and Young's inequalities, all the terms involving the discrete errors $\vec{e}^\vec{d}_h,e^{P_\jj}_h,\vec{e}^\vec{u}_h,e^{P_\FF}_h$ on the right-hand side of \eqref{eq:proovconv2} can be moved to the left-hand side, whereas the interpolation errors $\vec{e}^\vec{d}_I,e^{P_\jj}_I,\vec{e}^\vec{u}_I,e^{P_\FF}_I$ can be controlled by the estimates of \cref{th:interp}, yielding
\begin{equation}\label{eq:proofconvLast}\begin{aligned}
    &
    \normEN{(\vec{e}^\vec{d}_h,\{e^{P_\jj}_h\}_{\jj\in J},\vec{e}^\vec{u}_h,e^{P_\FF}_h)}^2
    \\ &\qquad
    \lesssim \sum_{K\in\mathscr T_{h,\PP}}
    \frac{h_K^{2m}}{m}\left\{
    \|\mathcal E_K\vec{d}(t)\|_{[H^{m+1}(\widehat{K})]^d}^2 + \sum_{\kk\in J}\|\mathcal E_K p_\kk(t)\|_{H^{m+1}(\widehat{K})}^2
    \right.
    \\ &\qquad
    \qquad\quad\left.
    \qquad\quad+\int_0^t\left[
    \|\mathcal E_K\partial_t\vec{d}(s)\|_{[H^{m+1}(\widehat{K})]^d}^2+\|\mathcal E_K\partial_{tt}^2\vec{d}(s)\|_{[H^{m+1}(\widehat{K})]^d}^2
    \phantom{\sum_{k\in J}}\right.
    \right.
    \\ &\qquad
    \qquad\qquad\left.
    \qquad\quad\left.
    +\sum_{k\in J}\left(
    \|\mathcal E_Kp_\kk(s)\|_{H^{m+1}(\widehat{K})}^2+\|\mathcal E_K\partial_tp_\kk(s)\|_{H^{m+1}(\widehat{K})}^2
    \right)
    \right]ds
    \right\}
    \\ &\qquad
    \quad+ \sum_{K\in\mathscr T_{h,\FF}}
    \frac{h_K^{2m}}{m}\left.
    \int_0^t\left[
    \|\mathcal E_K\vec{u}(s)\|_{[H^{m+1}(\widehat{K})]^d}^2 + \|\mathcal E_K\partial_t\vec{u}(s)\|_{[H^{m+1}(\widehat{K})]^d}^2
    \right.
    \right.
    \\ &\qquad
    \qquad\qquad\qquad\qquad\qquad\left.
    \left.
    +\|\mathcal E_Kp(s)\|_{H^{m+1}(\widehat{K})}^2
    \right]ds.
    \right.
\end{aligned}\end{equation}
Observing that an estimate for $\normEN{(\vec{e}^\vec{d}_I,\{e^{P_\jj}_I\}_{\jj\in J},\vec{e}^\vec{u}_I,e^{P_\FF}_I)}$ that is completely analogous to \eqref{eq:proofconvLast} can be proven by resorting to \cref{th:interp}, the triangle inequality
\[
\normEN{(\vec{e}^\vec{d},\{e^{P_\jj}\}_{\jj\in J},\vec{e}^\vec{u},e^{P_\FF})}^2 \leq \normEN{(\vec{e}^\vec{d}_h,\{e^{P_\jj}_h\}_{\jj\in J},\vec{e}^\vec{u}_h,e^{P_\FF}_h)}^2 + \normEN{(\vec{e}^\vec{d}_I,\{e^{P_\jj}_I\}_{\jj\in J},\vec{e}^\vec{u}_I,e^{P_\FF}_I)}^2
\]
concludes the proof.
\end{proof}

\section{Algebraic form of the fully discrete problem}\label{sec:matfullydiscrete}
The matrices $A_1,A_2$ of \eqref{eq:fullydiscr} have the following form:

\rotatebox{90}{\vbox{
\begin{align*}
A_1 &= \begin{bmatrix}
A_\PP & 0 & M_\PP & B_\text{A}^T & \cdots & B_\text{E}^T+J_{\PP}^T & 0 & 0 \\
0 & I & -\gamma\Delta t I & 0 & \cdots & 0 & 0 & 0 \\
-\frac{1}{\beta\Delta t^2}I & 0 & I & 0 & \cdots & 0 & 0 & 0 \\
-\frac{\theta\gamma}{\beta\Delta t}B_\text{A} & 0 & 0 & \theta K_{\text{A}\text{A}} & \cdots & \theta C_{\text{A}\text{E}} & 0 & 0\\
\vdots &\vdots &\vdots &\vdots &\vdots &\vdots &\vdots &\vdots\\
-\frac{\theta\gamma}{\beta\Delta t}(B_\EE + J_{\PP}) & & &  \theta C_{\EE\text{A}} & \cdots & \theta K_{\EE\EE} & -\theta J_{\FF} & 0 \\
0 & 0 & 0 & 0 & \cdots & \theta J_{\FF}^T & \frac{1}{\Delta t}M_\FF +\theta A_\FF & \theta B_\FF^T \\
0 & 0 & 0 & 0 & \cdots & 0 & -\theta B_\FF & \theta S
\end{bmatrix},\\[2em]
A_2 &= \begin{bmatrix}
0 & 0 & 0 & 0 & \cdots & 0 & 0 & 0 \\
0 & I & (1-\gamma)\Delta t I & 0 & \cdots & 0 & 0 & 0 \\
-\frac{1}{\beta\Delta t^2}I & -\frac{1}{\beta\Delta t}I & \frac{2\beta -1}{2\beta}I &  0 & \cdots & 0 & 0 & 0 \\
-\frac{\theta\gamma}{\beta\Delta t}B_\text{A} & \left(1-\frac{\theta\gamma}{\beta}\right) B_{\text{A}} & \theta\Delta t\left(1-\frac{\gamma}{2\beta}\right)B_\text{A} & \frac{1}{\Delta t} M_\text{A} - (1-\theta)K_{\text{A}\text{A}} & \cdots & (1-\theta)C_{\text{A}\text{E}} & 0 & 0\\
\vdots &\vdots &\vdots &\vdots &\vdots &\vdots\\
-\frac{\theta\gamma}{\beta\Delta t}(B_\EE + J_{\PP}) & \left(1-\frac{\theta\gamma}{\beta}\right)B_\EE & \theta\Delta t\left(1-\frac{\gamma}{2\beta}\right)B_\EE & (1-\theta)C_{\EE\text{A}} & \cdots & \frac{1  }{\Delta t}M_\EE + (1-\theta)K_{\EE\EE} & -(1-\theta) J_{\FF} & 0 \\
0 & 0 & 0 & 0 & \cdots & (1-\theta)J_{\FF}^T & \frac{1}{\Delta t} M_\FF + (1-\theta)A_\FF & (1-\theta)B_\FF^T \\
0 & 0 & 0 & 0 & \cdots & 0 & -(1-\theta)B_\FF & (1-\theta)S
\end{bmatrix}\\[2em]
\text{where} &\qquad
K_{\jj\jj} = 
A_\jj + C_{\jj\jj} \quad\forall\jj\in J.
\end{align*}
}}

\section*{Acknowledgment}
IF, NP, and PFA have been partially supported by ICSC--Centro Nazionale di Ricerca in High Performance Computing, Big Data, and Quantum Computing funded by European Union--NextGenerationEU.
PFA acknowledges the financial support by MUR under the PRIN 2017 research grant n. 201744KLJL.
IF, NP, and PFA have been partially funded by MUR for the PRIN 2020 research grant n. 20204LN5N5.
The present research is part of the activities of the project Dipartimento di Eccellenza 2023-2027, Dipartimento di Matematica, Politecnico di Milano.
All the authors are members of GNCS-INdAM.
\bibliography{main}
\bibliographystyle{apalike}

\end{document}